\newcommand{\blind}{0}
\def\BState{\State\hskip-\ALG@thistlm}
\crefname{assumption}{Assumption}{Assumptions}
\crefname{remark}{Remark}{Remarks}
\crefname{proposition}{Proposition}{Propositions}
\crefname{theorem}{Theorem}{Theorems}
\crefname{section}{Section}{Section}
\crefname{lemma}{Lemma}{Lemma}
\crefname{algorithm}{Algorithm}{Algorithms}
\crefname{example}{Example}{Examples}
\crefname{figure}{Figure}{Figure}
\crefname{appendix}{Appendix}{Appendix}
\crefname{table}{Table}{Table}
\crefname{equation}{equation}{equation}
\def\bY{\mathbf{Y}}
\def\bX{\mathbf{X}}
\def\bM{\mathbf{M}}
\def\bs{\mathbf{s}}
\def\trans{^{\scriptscriptstyle \sf T}}
\def\bgamma{\boldsymbol{\gamma}}
\DeclareMathOperator*{\argmin}{arg\,min}
\newtheorem{theorem}{Theorem}
\definecolor{OliveGreen}{rgb}{0,0.6,0}
\newtheorem{remark}{Remark}
\begin{document}

\title{Clustering sequence data with mixture Markov chains with covariates using multiple simplex constrained optimization routine (MSiCOR)}
\author{
Priyam Das$^{1}$\footnote{Corresponding author.}
\and Deborshee Sen$^{2}$
\and Debsurya De$^3$
\and Jue Hou$^4$
\and Zahra S. H. Abad$^5$
\and Nicole Kim$^6$
\and Zongqi Xia$^{7,8}$
\and Tianxi Cai$^{6,9}$
}
\date{$^1$Department of Biostatistics, Virginia Commonwealth University, USA \\
$^2$Department of Mathematical Sciences, University of Bath, UK \\
$^3$Department of Applied Mathematics and Statistics, Johns Hopkins University, USA \\
$^4$Department of Biostatistics, University of Minnesota, USA \\
$^5$Institute of Health Policy, Management and Evaluation, University of Toronto, Canada\\
$^6$Department of Biostatistics, Harvard T. H. Chan School of Public Health, USA\\
$^7$Department of Neurology, University of Pittsburgh, USA\\
$^8$Department of Biomedical Informatics, University of Pittsburgh, USA\\
$^9$Department of Biomedical Informatics, Harvard Medical School, USA}
\if0\blind
{
\maketitle 

} \fi

\if1\blind
{ \begin{center}
{\large Title}
\end{center}
\medskip
} \fi

\begin{abstract}
\noindent 
Mixture Markov Model (MMM) is a widely used tool to cluster sequences of events coming from a finite state-space. However the MMM likelihood being multi-modal, the challenge remains in its maximization. Although Expectation-Maximization (EM) algorithm remains one of the most popular ways to estimate the MMM parameters, however convergence of EM algorithm is not always guaranteed. 
%Despite faster convergence of the convex optimization techniques while maximizing MMM likelihood, their performance might suffer due to the multimodal nature of the likelihood. 
%Therefore the problem demands a global optimization routine for maximizing the likelihood. 
Given the computational challenges in maximizing the mixture likelihood on the constrained parameter space, we develop a pattern search-based global optimization technique which can optimize any objective function on a collection of simplexes, which is eventually used to maximize MMM likelihood. This is shown to outperform other related global optimization techniques. In simulation experiments, the proposed method is shown to outperform the expectation-maximization (EM) algorithm in the context of MMM estimation performance. The proposed method is applied to cluster Multiple sclerosis (MS) patients based on their treatment sequences of disease-modifying therapies (DMTs). We also propose a novel method to cluster people with MS based on DMT prescriptions and associated clinical features (covariates) using MMM with covariates. Based on the analysis, we divided MS patients into 3 clusters.
%: inferon-beta dominated, multi-DMTs, and natalizumab dominated. 
Further cluster-specific summaries of relevant covariates indicate patient differences among the clusters. 
%This method may guide the DMT prescription sequence based on clinical features. 

% Multiple sclerosis (MS) is an autoimmune disease of the central nervous system that causes neurodegeneration. While disease-modifying therapies (DMTs) reduce inflammatory disease activity and delay worsening disability in MS, there are significantly varying treatment responses across people with MS (pwMS). pwMS often receive serial monotherapies of DMTs. Here, we propose a novel method to cluster pwMS according to the sequence of DMT prescriptions and associated clinical features (covariates). This is achieved via a mixture Markov chain analysis with covariates, where the sequence of prescribed DMTs for each patient is modeled as a Markov chain. Given the computational challenges in maximizing the mixture likelihood on the constrained parameter space, we develop a pattern search-based global optimization technique which can optimize any objective function on a collection of simplexes, which is eventually used to maximize MMM likelihood. This is shown to outperform other related global optimization techniques. In simulation experiments, the proposed method is shown to outperform the expectation-maximization (EM) algorithm based method for clustering sequence data without covariates. Based on the analysis, we divided MS patients into 3 clusters: inferon-beta dominated, multi-DMTs, and natalizumab dominated. Further cluster-specific summaries of relevant covariates indicate patient differences among the clusters. This method may guide the DMT prescription sequence based on clinical features. 

\noindent
{\bf Keywords:} 
Multiple Sclerosis;
Disease-modifying therapy;
Medical Sequence Data;
Markov chain;
Mixture model;
Global Optimization.
\end{abstract}

\section{Introduction} \label{sec1}
Mixture Markov Model (MMM) is a well-known clustering technique in order to cluster event sequences where each event is coming from a state-space given by a collection of finite events. In literature several statistical applications of MMM and Mixture Hidden Markov Markov Model (MHMM) can be found for clustering event sequence data in various fields \citep{gupta2016mixtures,Chi2007,Li2019,Melnykov2016}. A few applications of  MMM can also be found in the field of biostatistics \citep{Haan2017,Das2023}. 
%used MMM for clustering longitudinal data sets in psychology. 
However, one of the challenging aspects of estimating MMM parameters remains in optimizing the mixture likelihood. Due to the multi-modal nature of the mixture likelihood, it is difficult to maximize using derivative-based methods as these tend to converge at local maxima. Although the expectation-maximization (EM) algorithm can be used for parameter estimation for MMM \citep{Helske2019}, its performance largely relies on the initial point solution \citep{Couvreur1997}. Several articles addressed the convergence issues of using EM algorithm for estimating parameters in mixture model \citep{Archambeau2003} which might pose a challenge in obtaining the maximum likelihood estimate (MLE) in case of MMM as well. \cite{gupta2016mixtures} proposed a faster alternative to the EM algorithm for optimizing the MMM likelihood, but this was later shown to under-perform when compared to the EM algorithm in terms of predictive performance. Therefore exploring alternative approaches to maximize MMM likelihood would be helpful in improving the estimation performance, resulting in improved inference and related prediction performances as well. 
%would be  makes it necessary to use an efficient optimization algorithm prior to performing the MMM analysis. 

As an alternative to EM algorithm and other convex optimization techniques, evolutionary global optimization techniques \citep{Andrzej2006} can be considered as a natural choice in order to address the complex and multi-modal nature of the objective function i.e., the MMM likelihood. For optimizing multi-modal functions, several evolutionary global optimization techniques have been proposed in the last few decades, e.g., `Genetic Algorithm (GA)' \citep{Fraser1957,Bethke1980,Goldberg1989} and `Simulated Annealing (SA)' \citep{Kirkpatrick1983,Granville1994}, which are widely used nowadays. These methods were first developed for unconstrained global optimization, later they were extended for optimizing objective functions with constraints \citep{Smith1994,Reid1996}. Among other non-deterministic global optimization techniques,  Particle Swarm Optimization (PSO) (\cite{Kennedy1995}, \cite{Eberhart1995}) remains popular for unconstrained global optimization. \cite{Torczon1997} proposed a different evolutionary based global optimization technique called `Generalized Pattern Search' (GPS) which is a more generalized version of the Direct search method earlier proposed by \cite{Hooke1961}. In GPS, using an `exploratory moves algorithm' \citep{Torczon1997}, the objective function values at a set of neighboring points around the current solution are evaluated and the point with the best (i.e., the maximum or minimum, based on requirement) objective function value is selected as the updated solution. Later GPS has been extended to solve global optimization problems on several constrained spaces and shown to outperform other related evolutionary based algorithms \citep{Das2021, Das2022}. In this paper, we propose a Pattern search based algorithm to maximize the MMM likelihood, compare the performance of the proposed optimization algorithm to other existing evolutionary global optimization techniques, and eventually demonstrate the estimation performance of the proposed algorithm while maximizing MMM likelihood to outperform the results obtained using EM algorithm based on simulation experiments.

The parameter space of the MMM likelihood for known (finite) number of clusters is composed of a set of transition matrices, initial state probability vectors and the vector of mixture proportions. Now, the parameters of the collection of transition matrices can be also observed as a collection of simplexes since each row of a transition matrix belongs to a simplex space. Also, note that, the mixture proportion vector and each of the initial state probability vectors are simplexes as well. Therefore, the whole parameter space of the MMM likelihood can be expressed as a collection of simplexes. In order to solve the MMM likelihood, we come up with a Pattern search based global optimization technique to solve any objective function whose parameter space is given by a collection of simplexes, called multiple simplex constrained optimization routine (MSiCOR). We also explore some of its theoretical properties. 

As an application to MSiCOR in the context of MMM, we cluster Multiple sclerosis (MS) patients based on their disease-modifying treatment (DMT) prescription history. MS is an autoimmune disease of the the central nervous system (CNS) that leads to neurodegeneration in the brain and spinal cord. In MS, the immune system damages the protective sheath (myelin) that covers nerve fibers, causing dysfunctional communication between the CNS and the rest of the body. For most people with MS (pwMS), the disease starts with relapses resulting in episodes of new or worsening symptoms due to acute focal inflammation of the CNS. MS is generally diagnosed in early adulthood. Major symptoms of MS include muscle stiffness, paralysis, cognitive impairment, fatigue, depression, visual disturbance, balance and gait difficulty, and problems with bladder, bowel, or sexual function. 

Although there is no cure for MS, there are 20 FDA-approved DMTs that reduce inflammatory disease activity and delay disease progression. DMTs can be divided into several mechanistic categories, including interferon-beta, glatiramer acetate, fumarates such as dimethyl fumerate and natalizumab, sphigophine-1-phosphate modulators such as fingolimod, and B-cell depletion agents such as rituximab. Over the course of this chronic disease, pwMS typically receive serial DMTs as monotherapies. Common reasons for switching DMTs include therapeutic failure (for example, relapse), intolerance or adverse events. 

Prior research efforts identified various demographic, clinical, genetic and neuroimaging features associated with disease activity of pwMS \citep{Myhr2001,Barcellos2002}. Further, several research works focused on the time series analysis of magnetic resonance imaging (MRI) intensity \citep{Meier2003} and volumetric medical image sequences \citep{Thirion1999,ghribi2018multiple} of the MS patients. \cite{Garcia-Dominguez2016} showed the dependence of DMT preference on socio-demographic and clinical features. However, to the best of our knowledge, few studies have examined DMT prescription sequence data. These prior studies have focused on either a single best patient-specific DMT option \citep{Grand'Maison2018}, or the criteria for switching or stopping DMTs \citep{Gross2019}. Moreover, emphasis has remained mostly on possible treatment options of a patient at a given time point based on response to previous DMTs. To the best of our knowledge, DMT prescription sequence remains an unexplored clinical space in MS. 

As an example, it is possible to estimate transition probabilities of moving from one DMT to another DMT based on patient history both at a personal and at a population level under certain statistical model assumptions. We hypothesize that the variation among these transition probabilities across patients is governed by clinical, demographic, and/or other factors. Since the choice of DMT prescription largely depends on patient disease status, the prescribed DMT at any given time can be mapped to a set of clinical conditions and observations. 
Thus, analysis of DMT transitions and patient clusters based on how transitions across DMT options occur over individual timelines can provide insights on the associations of the clinical features and observations with DMT prescription sequence.

Different statistical models can be used to cluster event sequences in logitudinal data \citep{Scott2020,Murphy2021}.
One potential approach is marked point processes (MPP; \citealp{Jacobsen2006}). 
However, there is generally a lag between the actual onset of new neurological symptoms (that is, relapses indicative of disease activity) and the observed date of DMT prescription. 
The lag can occur when patients visit their physicians a few days after the onset of symptoms/relapses. Given the typical time that it takes for insurance authorization process, there can be further lags between prescription date and the actual DMT start date.
Thus in the scenario of MS DMTs, incorporating the observed time periods between any two prescription dates into the model might be misleading, making MPPs unsuitable for modeling. However, the order in which a patient has been prescribed DMTs over their observation period is still useful and can be modeled in other ways using state-space models (SSMs). In the literature, hidden Markov chain models (HMMs) have been used to cluster event sequences \citep{Helske2019}. However in our scenario we do not have any hidden underlying variable like in HMMs.
% \tcomm{perhaps make it more concise but include more than just 1 approach?} \pd{(Not sure what other approach can we discuss about)} \tbox{what about hidden markov models and say why you choose not to use hiden markov models?}(\pd{Added 2 lines on it})

In this paper, we model MS DMT prescription sequences using a mixture of discrete state-space Markov chain models. There are a few notable reasons for clustering pwMS based on DMT sequences. First, over the disease course, pwMS are serially treated with multiple DMTs that might change over time. As such, estimating the transitional probabilities of changing across different pairs of DMTs across MS patients or any sub-populations will have clinical relevance. Second, after the individuals are clustered based on DMT sequences, cluster-specific summary statistics of clinical and demographic features can inform prescription guidance for future patients. Further, patient-specific covariates can also be used for clustering along with DMT sequences. We thus incorporate patient-specific covariates within MMMs to explain cluster memberships as in latent class analysis. Although prior publications reported MMM clustering of sequence data (for example, \citealp{gupta2016mixtures}), to the best of our knowledge, none has considered MMM analysis including subject-level covariates. \cite{Helske2019} describes the outline of the MMM and MHMM clustering, including covariates, but no further extensive simulation studies and case studies were explored using MMM. \cite{Bolano2020} considered a mixture transition distribution-like model to account for covariates in Markovian models with illustration using a 3-state HMM and a covariate with three levels. Of clinical relevance, to the best of our knowledge, no study has clustered MS DMT sequence data using MMMs. In related literature, \cite{Altman2005} applied a HMM on MRI lesion count data for MS patients, though this is substantially different from the proposed Markov chain model where the state-space consists of all possible types of DMTs for MS patients and the underlying model is a MMM.

% It is computationally challenging to maximize the likelihood for MMMs. The mixture likelihood tends to be multi-modal, which makes it difficult to maximize using derivative-based methods as these tend to converge at local maxima. Although the expectation-maximization (EM) algorithm can be used for parameter estimation for mixture Markov model without covariates \citep{Helske2019}, its performance largely relies on the initial point solution \citep{Couvreur1997}. 
% For mixture Markov model without covariates, \cite{gupta2016mixtures} proposed a faster alternative to the EM algorithm, but this was later shown to under-perform when compared to the EM algorithm in terms of predictive performance. This makes it necessary to use an efficient optimization algorithm prior to performing the MMM analysis of the MS DMTs sequence data. 
% We propose an efficient gradient-free Black-box optimization technique to achieve this in this paper. Our proposed optimization algorithm is a variant of the pattern search algorithm \citep{hooke1961direct}. We call the proposed optimization algorithm multiple simplex constrained optimization routine (MSiCOR) and explore some of its theoretical properties. Further, a brief literature review on Black-box optimization techniques is provided in Section A of the Supplementary Material.

The rest of this paper is organized as follows. The mixture Markov chain model is described in \cref{sec_likelihood}. \cref{computation} is dedicated to the maximum likelihood estimation process, including its computational aspects. Specifically, MSiCOR is described in \cref{sec.algo}. The comparative performance of MSiCOR along with some existing optimization techniques is evaluated in \cref{sec.benchmark}; and some of its theoretical properties are discussed in \cref{sec_theory}. A simulation study is conducted in \cref{sec_sim} to test the performance of MSiCOR, including comparisons with the EM algorithm in \cref{sec_EM}, in the context of MMM parameter estimation. \cref{sec_real} contains the analysis of MS DMT sequence data along with patient-level clinical data. 
Finally, \cref{sec_discussion} concludes the paper.
\\
%\ds{All computer code to replicate the numerical results can be found in [...], and we have also released an R package in [...].}

\section{Mixture Markov chain model (MMM)} \label{sec_likelihood}

\subsection{MMM likelihood without covariates}
Suppose the observed data consist of medication sequences on $N$ treatments from $K$ patients, $\{\bY_i=(Y_{i,1}, \dots, Y_{i, h_i})\}_{i=1}^K$, where $Y_{i,t} \in \{1, \dots, N\}$ represents the treatment received by patient $i$ at time $t \in \{1, \ldots, h_i\}$, and $h_i$ denotes the length (or the total number of time-points) of the medication sequence for the $i$-th patient. We assume that the medication sequences are generated from $L$ time-homogeneous Markov chain processes, or clusters. Let $\bM = [M_1, \dots, M_L]$ denote $L$ Markov transition matrices corresponding to the $L$ clusters, and $\bs = [s_l(k)]_{N\times L} = [\bs_1, \dots, \bs_L]$ denote the corresponding initial state probability vectors where $\bs_l = [s_l(1), \dots, s_l(N)]\trans$ for $l\in \{1,\ldots, L\}$. Then
\begin{align}
\mathbb{P}(\bY_i \mid z_i = l, \bX_i) & = s_l(Y_{i,1})M_l(Y_{i,1}, Y_{i,2}) \cdots M_l(Y_{i,h_i-1},Y_{i,h_i}),
\label{model1}
\end{align}
where $z_i$ denotes the cluster membership of the $i$-th patient, $M_l(y_t, y_{t+1})$ denotes the transition probability from $y_t$ to $y_{t+1}$ and $s_l(y_t)$ denotes the initial state probability of the state $y_t$ in the $l$-th cluster, for $y_t \in \{1,\ldots, N\}$.  The likelihood is given by 
%\tcomm{how is $\P(K,K)$ different from $\mathbb{M}$}
% 
\begin{align} \label{eq.prob2_nocovs}
L_{\bM,\bs} (Y_1, \dots, Y_K) 
& = 
\prod_{i=1}^K \sum_{l=1}^L w_{l} s_l(Y_{i,1}) \, M_l(Y_{i,1}, Y_{i,2}) \cdots M_l(Y_{i,h_i-1},Y_{i,h_i}),
\end{align}
where $\{w_l\}_{l=1}^L$ denotes the mixture probability of the $l$-th cluster
such that $\sum_{l=1}^L{w_{l}} = 1, w_l \geq 0$.

\subsection{MMM likelihood with covariates}
Consider along with medication sequence data, we also have covariate data for those $K$ patients given by $\{\bX_i\}_{i=1}^K$, where $\bX_i$ is a $p$-dimensional covariate with 1 being its first element. We assume that the medication sequences are generated from latent class models of $L$ time-homogeneous Markov chain processes with latent class membership $z$ also depending on the covariates $\bX_i$ but that $\bY_i$ is independent of $\bX_i \mid z$. \eqref{model1} holds true in this case as well, and additionally we assume 
\begin{align}
w_{il} \equiv \mathbb{P}(z_i = l \mid \bX_i) & = \frac{\exp{(\bX_i\trans \bgamma_l)}}{1+\sum_{l'=2}^L\exp{(\bX_i\trans \bgamma_{l'})}}, \quad l = 1, \dots, L,
\label{model2}
\end{align}
where $\Gamma = (\bgamma_1,\dots,\bgamma_L)$ denotes the cluster-specific coefficient vectors of the covariates. First cluster is considered as the baseline cluster, so we take $\bgamma_1 \equiv 0$. Here the likelihood is given by 
%\tcomm{how is $\P(K,K)$ different from $\mathbb{M}$}
% 
\begin{align} \label{eq.prob2}
L_{\bM,\bs,\Gamma} (Y_1, \dots, Y_K) 
& = 
\prod_{i=1}^K \sum_{l=1}^L w_{il} s_l(Y_{i,1}) \, M_l(Y_{i,1}, Y_{i,2}) \cdots M_l(Y_{i,h_i-1},Y_{i,h_i}).
\end{align}
The posterior probability of a patient belonging to cluster $l$ can be obtained as
\begin{align}\label{eq.memebership}
\mathbb{P}(z_i = l \mid Y_i, X_i) 
= & 
\frac{\mathbb{P}(Y_i \mid z_i = l, X_i) \, \mathbb{P}(z_i = l \mid X_i)}{\sum_{l'=1}^L \mathbb{P}(Y_i \mid z_i = l', X_i) \, \mathbb{P}(z_i = l' \mid X_i)}.
\end{align}

Once we estimate the cluster-specific parameters, we can use \eqref{eq.memebership} to identify where any given patient belongs. Thus, after estimating the clusters, we may summarize the available statistics for the patients to explore the cluster characteristics and differences. In \cref{sec_real}, after the clusters are estimated based on real data, we summarize the patient relapse rate and rates of a few other relevant medical codes for each cluster. These include the International Classification of Diseases (ICD) code, Current Procedural Terminology (CPT) code, and the Concept Unique Identifiers (CUIs).

\section{Computation}
\label{computation}

\subsection{The estimation problem}

We maximize \cref{eq.prob2} in order to estimate the parameters for each Markov chain component. 
To begin with, the $(d-1)$-dimensional simplex is defined as
\begin{align*}
\Delta^{d-1} 
& = 
\left \{(x_{1}, \dots, x_d) \in \mathbb{R}^{d} \; : \; x_i \geq 0, \; i=1,\dots,d, \; \sum_{i=1}^{d}x_i= 1 \right \}.
\end{align*}
In \cref{eq.prob2}, $s_l$ and each row of $M_l$ belongs to $(N-1)$ dimensional simplexes, and $\boldsymbol{\alpha} \in \Delta^{L-1}$. Apart from the coefficient vector $\Gamma$, the remainder of the parameter space consists of $L(N+1)$ simplexes of size $(N-1)$ and one simplex of size $(L-1)$. 

In order to maximize \cref{eq.prob2}, one may use the EM algorithm \citep{Helske2019}, however, EM algorithm tends to get stuck at poor local solutions due to the possible multi-modal nature of the mixture likelihood. Moreover, the estimation performance of EM also depends on the initial starting point, as discussed in Section \ref{sec1}. An alternative is to apply a direct numerical maximization procedure on the objective function. Widely used algorithms for constrained optimization include interior point (IP) methods \citep{Potra2000}, sequential quadratic programming (SQP; \citealp{Wright2005}), and the Broyden-Fletcher-Goldfarb-Shanno (BFGS) algorithm. However, these algorithms typically require good initial starting points in order to reduce the risk of being trapped at poor local maxima. In addition, popular global optimization techniques include the genetic algorithm (GA; \citealp{Fraser1957}) and simulated annealing (SA; \citealp{Kirkpatrick1983}). 
However, a major disadvantage of these algorithms is that they can be computationally slow, and thus there remains a trade-off between estimation accuracy and computation time.
%Further, due to the possible multi-modal nature of the mixture likelihood, as mentioned earlier, traditional derivative-based algorithms may also tend to get stuck at poor local solutions. 

Among other global optimization methods, a generalized pattern search algorithm was proposed by \cite{Torczon1997}, and variations have been proposed for various constrained parameter spaces \citep{Lewis1999,Lewis2000}. Variants of PS have been shown to outperform existing GA or SA algorithms (see, for example, \citealp{Das2021}). In order to maximize the likelihood, we first develop a Pattern search \citep{Torczon1997} based algorithm to maximize any objective function whose parameter space is given by a collection of simplexes, called  multiple simplex constrained optimization routine (MSiCOR). Then, we further modify it to perform updating steps of the unconstrained coefficient vector $\Gamma$, with an end goal to maximize \cref{eq.prob2} overall. Thus we consider a variation of PS that can be used to optimize objective functions over parameter spaces that are a collection of simplexes and of unconstrained parameter spaces, as is the case for the MMM likelihood with covariates based on MS DMT sequences and patients' clinical data.

% We focus on pattern search (PS) algorithms in this article. 
% 
% In this article, we consider a variation of PS that can be used to optimize objective functions over parameter spaces that are a collection of simplexes and of unconstrained parameter spaces, as is the case for the MMM likelihood with covariates based on MS DMT sequences and patients' clinical data. We call this the multiple simplex constrained optimization routine (MSiCOR).

\subsection{Multiple simplex constrained optimization routine (MSiCOR)} \label{sec.algo}

Consider a non-convex objective function $f : \Delta^{n_1-1} \times \cdots \times \Delta^{n_B-1} \mapsto \mathbb{R}$ which we wish to minimize. 
In pattern search (PS), within an iteration while optimizing over a $M$-dimensional space, $2M$ candidate points in the neighborhood of the current solution are explored. These are obtained by changing one coordinate at a time, both in positive or negative direction, keeping other coordinates unchanged. For example, in case we want to optimize a function $f$ over an unconstrained space $\mathbb{R}^M$, given the step-size $s$ and current solution $\mathbf{z} = (z_1,\dots,z_M)$, the objective function $f$ is evaluated at $2M$ new candidate points, given by $\{\mathbf{z}_i^+\}_{i=1}^M$ and $\{\mathbf{z}_i^-\}_{i=1}^M$ where $\mathbf{z}_i^+ = (z_1,\dots,z_{i-1},z_i+s,z_{i+1},\dots,z_M)$ and $\mathbf{z}_i^- = (z_1,\dots,z_{i-1},z_i-s,z_{i+1},\dots,z_M)$. At each iteration, the best solution out of these $(2M+1)$ points is selected based on the objective function values at those points. In PS, the step-size $s$ is chosen adaptively. Unlike the case of unconstrained pattern search, in our problem, the parameter space is composed of multiple simplexes, and the general PS needs to be modified for our scenario.

MSiCOR consists of several \emph{runs}. 
Iterations are performed within each run until a convergence criteria is met, which is detailed in the sequel. Each run starts from the solution returned by the previous run and attempts to find a better solution, with the initial solution for the first run being user-provided. The algorithm terminates and returns the final solution when the solutions obtained by two consecutive runs are close. Having multiple runs aids in jumping out of local minima.

\subsubsection{Tuning parameters}
Each run depends on the following tuning parameters: initial global step-size $s_{\text{initial}}>0$, step decay rate $\rho>1$, step-size threshold $\phi>0$, and sparsity threshold $\lambda \geq 0$. The values of these tuning parameters are set by the user and are kept unchanged across runs. We consider two additional tuning parameters $\tau_1, \tau_2$ to control the convergence criteria. Finally, the maximum number of iterations within a run and the maximum number of runs can be fixed as $M_{\text{iter}}$ and $M_{\text{run}}$, respectively. 

\subsubsection{Global and local step-sizes} 
\label{sec.global.local.stepsize}

The parameter space consists of multiple unit-simplex blocks. Suppose there are $B$ unit-simplex blocks, and that the $j$th simplex block is $(n_j - 1)$-dimensional and denoted by $\mathbf{P}_j = (p_{j,1}, \dots, p_{j,n_{j}}) \in \Delta^{n_j-1}$ for $j = 1,\dots, B$. The total number of parameters is $M = \sum_{j=1}^B n_j$. Within each run, we consider a global step-size $\eta$ and $2M$ \emph{local} step-sizes $\{\{(s_{j,i}^+, s_{j,i}^-)\}_{i=1}^{n_j}\}_{j=1}^{B}$ which are chosen adaptively depending on the tuning parameter values as well as the improvement in the values of objective function. 
Inside a run, in the first iteration the value of the global step-size is set to be $\eta^{(1)} = s_{\text{initial}}$, where $\eta^{(h)}$ denotes the value of global step-size in the $h$th iteration in a run. The value of $\eta$ remains the same throughout an iteration. At the end of each iteration,
its value either remains same or gets divided by $\rho$ $(>1)$ based on a criteria described later in \cref{sec.loop.termnination}.
At the beginning of an iteration, the values of the local step-sizes $s_{j,i}^+$ and $s_{j,i}^-$ are set to be the value of the global step-size $\eta$ of that iteration. 

\subsubsection{Exploratory movements} 
\label{sec.exploratory.movement}
Suppose the current value of the parameters at the beginning of the $h$th iteration is $\mathbf{P} =\mathbf{P}^{(h)} = (\mathbf{P}_1^{(h)},\dots, \mathbf{P}_B^{(h)})$, where $\mathbf{P}_j^{(h)} = (p_{j,1}^{(h)},\dots,p_{j,n_j}^{(h)}) \in \Delta^{n_j-1}$ for $j=1,\dots, B$. 
During the iteration, the objective function is evaluated at $2M$ feasible points in the neighborhood of $\mathbf{P}^{(h)}$. These feasible points are obtained by taking steps around $\mathbf{P}^{(h)}$ modulated by the local step-sizes $s_{j,i}^{+},s_{j,i}^{-}$ for $j=1,\dots, B$, and $i = 1,\dots, n_j$.
These can be divided into $M$ ``positive'' movements $(j,i,+)$ and $M$ ``negative'' movements $(j,i,-)$ for $j = 1,\dots, B$ and $i = 1,\dots, n_j$. We call a coordinate of a unit-simplex box ``significant'' if its value is greater than $\lambda$. 
Suppose that there are $m_j (< n_j)$ significant positions in the $j$th simplex block $\mathbf{P}_j^{(h)}$ excluding the $i$th position $p_{j,i}^{(h)}$. The $(j,i,+)$th movement consists of updating $p_{j,i}^{(h)}$ to $(p_{j,i}^{(h)} + s_{j,i}^{+})$ and subtracting $s_{j,i}^{+}/m_j$ from the $m_j$ significant positions, thus keeping the sum of the values of the $j$th simplex block one. We then check whether the updated $\mathbf{P}_j^{(h)}$ is in the unit-simplex. If so, we set $\mathbf{P}_j^{(h)}(i,+)$ equal to the updated $\mathbf{P}_j^{(h)}$, and if not (since it is possible if either $p_{j,i}^{(h)} + s_{j,i}^{+}>1$ or at least one of the updated values at the significant positions is negative), we update the local step-size by setting $s_{j,i}^{+} = s_{j,i}^{+}/\rho$ and repeat the same step until the updated $\mathbf{P}_j^{(h)}$ is in unit-simplex. However, we do not allow $s_{j,i}^{+}$ to be smaller than $\phi$. In case $s_{j,i}^{+}$ becomes smaller than $\phi$ (by dividing it multiple times by $\rho)$, we set $\mathbf{P}_j^{(h)}(i,+)=\mathbf{P}_j^{(h)}$. The $(j,i,-)$th movement is performed in a very similar manner by subtracting $s_{j,i}^{-}$ from $p_{j,i}^{(h)}$ and adding $s_{j,i}^{-}/m_j$ to the other significant positions, and we refrain from detailing it here.

\subsubsection{Sparsity control}
\label{sec.sparsity.control}

We incorporate a sparsity control step in order to encourage possibly sparse solutions. For each of the obtained modified simplex blocks $\{\mathbf{P}_j^{(h)}(i,+)\}_{i=1}^{n_j}$ and $\{\mathbf{P}_j^{(h)}(i,-)\}_{i=1}^{n_j}$ for $j = 1,\dots,B$, we set the values of the ``insignificant'' coordinates (that is, coordinates less than $\lambda)$ to zero. This is adjusted for in the ``significant'' positions by incrementing each of them by the same amount in order to keep the sum of all the coordinates to be one.
After the sparsity control step, if the modified $\mathbf{P}_j^{(h)}(i,+)$ (or $\mathbf{P}_j^{(h)}(i,-))$ remains in unit-simplex, we denote it by $\mathbf{\bar{p}}_j^{(h)}(i,+)$ (or $\mathbf{\bar{p}}_j^{(h)}(i,-))$. If not, we set $\mathbf{\bar{p}}_j^{(h)}(i,+) = \mathbf{P}_j^{(h)}(i,+)$ \big(or $\mathbf{\bar{p}}_j^{(h)}(i,-) = \mathbf{P}_j^{(h)}(i,-)$\big).

\begin{remark}
$\lambda$ should be taken relatively large in case of prior knowledge that the final solution is sparse. If not, it can be chosen relatively small or zero. 
\end{remark}

\subsubsection{Selecting the best candidate solution}
\label{sec.best.cadidate}

Corresponding to the modified simplex block $\mathbf{\bar{p}}_j^{(h)}(i,+)$, the candidate solution is given by $\mathbf{P}^{(h)}(j,i,+) = (\mathbf{P}_1^{(h)}, \dots, \mathbf{P}_{j-1}^{(h)}, \mathbf{\bar{p}}_j^{(h)}(i,+), \mathbf{P}_{j+1}^{(h)},\dots, \mathbf{P}_B^{(h)})$. Similarly, corresponding to the modified simplex block $\mathbf{\bar{p}}_j^{(h)}(i,-)$, the candidate solution is given by $\mathbf{P}^{(h)}(j,i,+) = (\mathbf{P}_1^{(h)}, \dots, \mathbf{P}_{j-1}^{(h)}, \mathbf{\bar{p}}_j^{(h)}(i,-), \mathbf{P}_{j+1}^{(h)}, \\ \dots, \mathbf{P}_B^{(h)})$. Note that $\{\mathbf{\bar{p}}_j^{(h)}(i,+)\}_{j=1}^{n_j}$ and $\{\mathbf{\bar{p}}_j^{(h)}(i,-)\}_{j=1}^{n_j}$ belong to the unit-simplex. Thus we obtain $2M$ candidate solution points $\mathbf{P}^{(h)}(j,i,+)$ and $\mathbf{P}^{(h)}(j,i,-)$ for $j = 1,\dots,B$ and $i = 1,\dots,n_j$. The objective function is evaluated at these $2M$ candidate solutions and the best solution point (that is, the solution point where the value of the objective function is the lowest) out of the $(2M+1)$ points including current solution $\mathbf{P}^{(h)}$ is set as the updated solution $\mathbf{P}^{(h+1)}$.

\subsubsection{Loop termination criteria} 
\label{sec.loop.termnination}

As mentioned in \cref{sec.global.local.stepsize}, at the end of each iteration, the value of $\eta$ either remains the same or gets divided by $\rho$. If $|f(\mathbf{P}^{(h+1)}) - f(\mathbf{P}^{(h)})| < \tau_1$ at the end of the $(h+1)$th iteration, we set $\eta = \eta/\rho$, and leave it unchanged otherwise. Moreover, we set the the minimum allowable value of $\eta$ to be $\phi$, and terminate a run if $\eta$ becomes smaller than $\phi$. For example, consider a situation where $\eta > \phi$ at the start of the $h$th iteration within the $R$th run. However, at the end of $h$th iteration, $|f(\mathbf{P}^{(h)}) - f(\mathbf{P}^{(h-1)})| < \tau_1$, and so we set $\eta = \eta/\rho$, but it happens to be that $\eta/\rho < \phi$. We then terminate the $R$th run at the $h$th iteration and return the solution obtained at the end of $h$th iteration as the solution of the $R$th run, which then serves as the starting point for the $(R+1)$th run. Recall that we set $\eta = s_{\text{initial}}$ at the beginning of each run. 
Suppose $\mathbf{\hat{P}}^{(R)}$ denotes the solution returned by the $R$th run. The algorithm terminates when $|f(\mathbf{\hat{P}}^{(R)}) - f(\mathbf{\hat{P}}^{(R-1)})| < \tau_2$ and returns $\mathbf{\hat{P}}^{(R)}$ as the final solution. In \cref{flowchart} we provide a flowchart of the runs executed within MSiCOR. Pseudo-code for MSiCOR is provided in Algorithm \ref{euclid}. 

\begin{algorithm}
\caption{MSiCOR}\label{euclid}
\begin{algorithmic}[1]
\State $R \gets 1$.

\BState \emph{top}:

\State $h \gets 1$ and $\eta^{(0)},\eta^{(1)} \gets s_{\text{initial}}$.

\If{$R = 1$}

\State
$\boldsymbol{\mathbf{P}}^{(0)} \gets \text{Initial guess}$ (Cell/list of simplexes of dimensions $n_1, \dots, n_B$ respectively).

\Else 

\State 
$\boldsymbol{\mathbf{P}}^{(0)} \gets \widehat{\mathbf{P}}^{(R-1)}$.

\EndIf

\While{$(h \leq M_{\text{iter}}$ and $\eta^{(h)} > \phi)$}
% \If{$j > M_{\text{iter}}$ or $s^{(j)} < \phi$} 
% \Else 

\State
$F_1 \gets f(\mathbf{P}^{(h-1)})$ and $\eta \gets \eta^{(h-1)}$.

\For{$j=1,\dots,B$}

\For{$v = 1,\dots,2n_j$}

\State 
$i \gets [(v+1)/2]$.

\State 
$\mathbf{q}_{j,v} \gets \mathbf{P}_j^{(h-1)}$ and $\mathbf{q}_v^{\text{temp}} \gets \mathbf{P}_j^{(h-1)}$.

\State 
$\Lambda \gets \text{which}(p_{j,l}^{(h-1)} < \lambda), l \in \{1,\dots,n_j\} \setminus \{i\}$.

\State 
$\Gamma \gets \text{which}(p_{j,l}^{(h-1)} \geq \lambda), l \in \{1,\dots,n_j\} \setminus \{i\}$.

\State 
$s_{j,v} \gets (-1)^v \eta^{(h)}$.

\If{$(n(\Gamma) > 0)$}

\State 
$garbage \gets \Sigma(\mathbf{q}^{\text{temp}}_v(\Lambda))$.

\State 
$\mathbf{q}_{j,v}(i) \gets \mathbf{q}^{\text{temp}}_v(i) + s_{j,v}$
and
$\mathbf{q}_{j,v}(\Gamma) \gets \mathbf{q}^{\text{temp}}_v(i) - s_{j,v}/n(\Gamma)+garbage/n(\Gamma)$
and 
$\mathbf{q}_{j,v}(\Lambda) \gets 0$.

\While{$(\mathbf{q}_{j,v} \not\in \Delta^{n_j-1}$ and $|s_{j,v}|>\phi)$}

\State 
$s_{j,v} \gets s_{j,v}/\rho$.

\State
$\mathbf{q}_{j,v}(i) \gets \mathbf{q}^{\text{temp}}_v(i) + s_{j,v}$
and
$\mathbf{q}_{j,v}(\Gamma) \gets \mathbf{q}^{\text{temp}}_v(i) - s_{j,v}/n(\Gamma)+garbage/n(\Gamma)$.

\EndWhile
\Else

\State 
$\mathbf{q}_{j,v}(i) \gets 1$ and $\mathbf{q}_{j,v}(\Lambda) \gets 0$.
\EndIf

\State
\textbf{if} $(\mathbf{q}_{j,v} \in \Delta^{n_j-1})$ \textbf{then} $f_{j,v} \gets f(\mathbf{P}_1^{(h-1)},\dots, \mathbf{P}_{i-1}^{(h-1)},\mathbf{q}_{j,v}, \mathbf{P}_{i+1}^{(h-1)},\dots, \mathbf{P}_{n_j}^{(h-1)})$
\textbf{else} $f_{j,v} \gets F_1$.

\EndFor
\EndFor

\State
$(j_{\text{best}}, v_{\text{best}}) \gets \argmin_{j,v} f_{j,v}$, over $j=1,\dots,B, v = 1,\dots, n_j$.

\State
$F_2 \gets f_{j_{\text{best}},v_{\text{best}}}$.

\State 
$\mathbf{P}^{(h)} \gets \mathbf{P}^{(h-1)}$.

\State
\textbf{if} $(F_2 < F_1)$ \textbf{then}
$\mathbf{P}_{j_{\text{best}}}^{(h)} \gets \mathbf{q}_{j_{\text{best}}, v_{\text{best}}}$.

\If{$(h > 1)$}

\State 
\textbf{if} $(|F_1-F_2| < \tau_1$ and $\eta>\phi)$ \textbf{then} 
$\eta \gets \eta/\rho$.

\EndIf

\State
$\eta^{(h)} \gets \eta$ and $h \gets h+1$.

\EndWhile

\State
$\widehat{\mathbf{P}}^{(R)} \gets \mathbf{P}^{(h)}$. 

\If{$|f(\widehat{\mathbf{P}}^{(R)}) - f(\widehat{\mathbf{P}}^{(R-1)})| < \tau_2$}
%\If{$(f(\widehat{\boldsymbol{\beta}}^{(R)}) = f(\widehat{\boldsymbol{\beta}}^{(R-1)})$}

\State
\Return $\widehat{\mathbf{P}} = \widehat{\mathbf{P}}^{(R)}$ as \textbf{final solution}.

\State
\textbf{exit}
\Else

\State
$R \gets R+1$.

\State
\textbf{goto} \emph{top}.
\EndIf
\end{algorithmic}
\end{algorithm}

\begin{figure}[ht]
\centering
\includegraphics[width=1\textwidth]{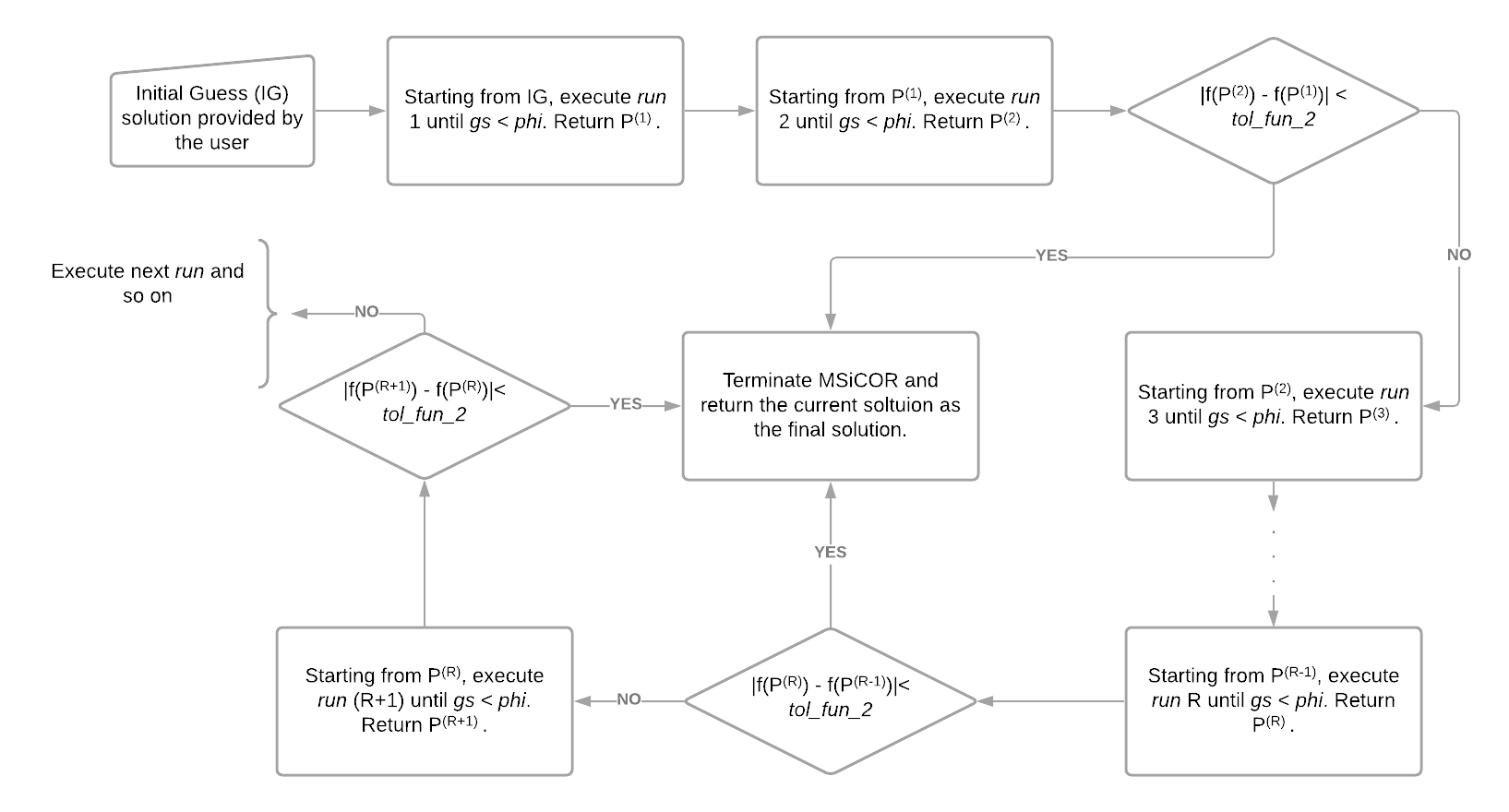}
\caption{Flowchart of the MSiCOR : \textit{gs} (same as $\eta$) denotes the global step-size, $phi\, (\phi)$ denotes the step-size threshold, $P^{(h)}$ denotes the solution returned at the end of $h$-th run, \textit{tol\_fun\_2} (same as $\tau_2$) is a cut-off value.}
\label{flowchart}
\end{figure}

\subsection{Comparative performance using benchmark functions}
\label{sec.benchmark}
In order to evaluate the comparative performance of MSiCOR, we consider the minimization problem of four benchmark functions \citep{Jamil2013} namely Rastrigin function, Ackley's function, sphere function and Griewank function whose parameter spaces are modified to be a collection of simplexes (see Section C of the supplementary material for function expressions). The benchmark functions are quite challenging to minimize because of their typical complex structure along with presence of multiple minimums (generally) which pose challenges in finding the global minimum. It is impossible to design an optimization algorithm which can always find the global minimum for any black-box function. Although some global optimization algorithms are backed up with theoretical evidence on global convergence (e.g., \cite{Schmitt2001} on genetic algorithm), however, those convergence properties are derived based on a lot of assumptions regarding the properties of the objective function. Firstly those properties might not hold true in practice. Secondly there is no way to verify if those properties hold true for a black-box function since the function is already `black-box' by nature. Therefore, to comprehend the performance of any heuristic, global or black-box optimization technique, a common practice is to conduct experiments based on minimizing benchmark functions; and eventually based on average computation time and quality of the obtained solutions, one may get an idea about the relative performance of an algorithm. In this context, along with MSiCOR we also consider Genetic Algorithm (GA), Sequential Quadratic Programming (SQP) and Interior Point (IP) optimization techniques to minimize the aforementioned benchmark functions (see Section C of the supplementary material for details). IP and SQP algorithms search for local minimum while optimizing any function and in general they are less time consuming. On the other hand GA tries to find global minimum being more time consuming. These above-mentioned well-known algorithms are available in Matlab via the Optimization Toolbox functions \texttt{fmincon} (for IP and SQP algorithm)  and \texttt{ga} (for GA). While using IP and SQP algorithms, the upper bound for maximum number of iterations and function evaluations have been set to be infinity each. For GA, the default options of `ga' function in Matlab has been considered. MSiCOR is implemented in Matlab. The comparative study has been performed for the cases $n=5, d=5$ and $n=5,d=10$ for all the above-mentioned algorithms, where $n$ is the number of simplexes and $(d-1)$ is the dimension of each simplex blocks. All the computations have been performed in a computer with 64-Bit Windows 8.1, Intel i7 3.6GHz processor, 32GB RAM. For each case, all the algorithms have been initialized from 100 randomly generated starting points. The true minimum of all considered modified benchmark functions is 0. The average computation time (in seconds, also standard deviation in parenthesis), obtained minimum value of the objective functions and standard deviations of obtained objective function values for each cases have been noted down in \cref{dd}. It is observed that MSiCOR provides better solution for considered benchmark functions compared to other methods, except for modified sphere function. The sphere function being convex, it is expected for IP and SQP to perform well while optimizing it. But MSiCOR outperforms GA for all considered benchmark functions. Although MSiCOR took more time on average to converge compared to IP and SQP, it took much less time compared to GA. It is also noted that the standard deviation of the objective function values at the obtained solutions is the least for MSiCOR, which implies that MSiCOR algorithm is more robust and less dependent on starting points compared to other algorithms. To check the performance of the proposed algorithm in higher dimensional problems, additional simulation studies have been performed. In \cref{tab_high_dim} we explore the performance of MSiCOR for higher dimensional simplex blocks ($n = 20,50$, $d = 10,20$).

\begin{table}[]
\centering
\resizebox{1\columnwidth}{!}{
\begin{tabular}{|c|c|ccc|ccc|}
\hline
\multirow{2}{*}{Functions} & \multirow{2}{*}{Algorithms} & \multicolumn{3}{c|}{$n=5, d=5$} & \multicolumn{3}{c|}{$n=5,d=10$} \\ \cline{3-8} 
 &  & \multicolumn{1}{c|}{min. value} & \multicolumn{1}{c|}{sd of solutions} & mean time (sd) & \multicolumn{1}{c|}{min. value} & \multicolumn{1}{c|}{sd of solutions} & mean time (sd) \\ \hline
\multirow{4}{*}{\begin{tabular}[c]{@{}c@{}}Modified\\ Rastrigin\end{tabular}} & MSiCOR & \multicolumn{1}{c|}{1.39e + 01} & \multicolumn{1}{c|}{8.20} & 6.83 (2.17) & \multicolumn{1}{c|}{2.30e + 01} & \multicolumn{1}{c|}{19.59} & 9.12 (3.26) \\ \cline{2-8} 
 & IP & \multicolumn{1}{c|}{4.01e + 02} & \multicolumn{1}{c|}{276.27} & 0.12 (0.04) & \multicolumn{1}{c|}{9.14e + 02} & \multicolumn{1}{c|}{666.64} & 4.35 (17.09) \\ \cline{2-8} 
 & SQP & \multicolumn{1}{c|}{1.62e + 02} & \multicolumn{1}{c|}{150.51} & 0.05 (0.01) & \multicolumn{1}{c|}{8.78e + 02} & \multicolumn{1}{c|}{244.09} & 0.21 (0.02) \\ \cline{2-8} 
 & GA & \multicolumn{1}{c|}{1.51e + 01} & \multicolumn{1}{c|}{56.72} & 20.26 (2.26) & \multicolumn{1}{c|}{2.32e + 02} & \multicolumn{1}{c|}{1157.85} & 22.08 (4.40) \\ \hline
\multirow{4}{*}{\begin{tabular}[c]{@{}c@{}}Modified\\ Ackley's\end{tabular}} & MSiCOR & \multicolumn{1}{c|}{3.48e - 02} & \multicolumn{1}{c|}{1.06} & 6.50 (2.53) & \multicolumn{1}{c|}{6.51e - 02} & \multicolumn{1}{c|}{0.23} & 9.81 (3.62) \\ \cline{2-8} 
 & IP & \multicolumn{1}{c|}{4.86e + 01} & \multicolumn{1}{c|}{6.87} & 0.13 (0.09) & \multicolumn{1}{c|}{6.86e + 01} & \multicolumn{1}{c|}{4.50} & 0.36 (0.07) \\ \cline{2-8} 
 & SQP & \multicolumn{1}{c|}{7.75e + 00} & \multicolumn{1}{c|}{14.62} & 0.06 (0.01) & \multicolumn{1}{c|}{2.17e + 01} & \multicolumn{1}{c|}{14.77} & 0.18 (0.04) \\ \cline{2-8} 
 & GA & \multicolumn{1}{c|}{6.63e + 00} & \multicolumn{1}{c|}{11.94} & 19.93 (2.60) & \multicolumn{1}{c|}{1.60e + 01} & \multicolumn{1}{c|}{7.32} & 24.33 (3.26) \\ \hline
\multirow{4}{*}{\begin{tabular}[c]{@{}c@{}}Modified\\ Sphere\end{tabular}} & MSiCOR & \multicolumn{1}{c|}{7.42e - 05} & \multicolumn{1}{c|}{\textless 1e - 06} & 5.86 (0.59) & \multicolumn{1}{c|}{5.13e - 04} & \multicolumn{1}{c|}{5.7e - 06} & 9.92 (1.37) \\ \cline{2-8} 
 & IP & \multicolumn{1}{c|}{1.63e - 15} & \multicolumn{1}{c|}{\textless 1e - 06} & 0.07 (0.03) & \multicolumn{1}{c|}{8.90e - 16} & \multicolumn{1}{c|}{\textless 1e - 06} & 0.12 (0.04) \\ \cline{2-8} 
 & SQP & \multicolumn{1}{c|}{5.60e - 12} & \multicolumn{1}{c|}{\textless 1e - 06} & 0.04 (0.00) & \multicolumn{1}{c|}{2.72e - 11} & \multicolumn{1}{c|}{\textless 1e - 06} & 0.17 (0.02) \\ \cline{2-8} 
 & GA & \multicolumn{1}{c|}{5.80e - 01} & \multicolumn{1}{c|}{45.25} & 20.25 (1.89) & \multicolumn{1}{c|}{1.34e + 02} & \multicolumn{1}{c|}{633.55} & 22.76 (5.43) \\ \hline
\multirow{4}{*}{\begin{tabular}[c]{@{}c@{}}Modified\\ Griewank\end{tabular}} & MSiCOR & \multicolumn{1}{c|}{4.94e - 01} & \multicolumn{1}{c|}{0.40} & 4.29 (1.42) & \multicolumn{1}{c|}{1.08e + 00} & \multicolumn{1}{c|}{0.55} & 7.75 (1.46) \\ \cline{2-8} 
 & IP & \multicolumn{1}{c|}{1.07e + 01} & \multicolumn{1}{c|}{28.54} & 0.17 (0.05) & \multicolumn{1}{c|}{2.19e - 01} & \multicolumn{1}{c|}{10.74} & 0.31 (0.07) \\ \cline{2-8} 
 & SQP & \multicolumn{1}{c|}{1.44e + 01} & \multicolumn{1}{c|}{13.20} & 0.07 (0.01) & \multicolumn{1}{c|}{6.97e - 01} & \multicolumn{1}{c|}{0.99} & 0.28 (0.03) \\ \cline{2-8} 
 & GA & \multicolumn{1}{c|}{1.04e + 01} & \multicolumn{1}{c|}{3.57} & 18.53 (1.44) & \multicolumn{1}{c|}{2.73e + 02} & \multicolumn{1}{c|}{12.39} & 26.67 (3.16) \\ \hline
\end{tabular}}
\caption{Comparative study of MSiCOR, IP, SQP and GA for optimizing modified Rastrigin, Ackley, Sphere and Griewank function starting from 100 randomly generated initial points. Objective function values at the best (or minimum) solution out of 100 obtained solutions using the aforementioned algorithms are noted down for each case (under `min. value' column); along with standard deviation (s.d.) of objective function values at the solutions (under `sd of solutions' column). Also mean and s.d. (within parenthesis) of computation times (in seconds) are noted for each case (under `mean time (sd)' column).}
\label{dd}
\end{table}

\begin{table}[]
\centering
\resizebox{1\columnwidth}{!}{%
\bgroup
\begin{tabular}{|c|c|c|c|c|c|}
\hline
Functions & Measures & $n = 20, d = 10$ & $n = 20, d = 20$ & $n = 50, d = 10$ & $n = 50, d = 20$ \\ \hline
\multirow{3}{*}{\begin{tabular}[c]{@{}c@{}}Modified \\ Rastrigin\end{tabular}} & min. value & 1.03e - 02 & 2.04e - 02 & 2.58e - 02 & 5.07e - 02 \\ \cline{2-6} 
 & sd of solutions & 0.41 & 0.32 & 0.51 & 0.39 \\ \cline{2-6} 
 & mean time (sd) & 53.28 (10.05) & 80.27 (25.21) & 438.23 (78.37) & 689.45 (129.72) \\ \hline
\multirow{3}{*}{\begin{tabular}[c]{@{}c@{}}Modified\\ Ackley's\end{tabular}} & min. value & 4.01e - 02 & 4.01e - 02 & 1.00e - 01 & 1.00e - 01 \\ \cline{2-6} 
 & sd of solutions & 0.92 & 0.56 & 0.96 & 1.02 \\ \cline{2-6} 
 & mean time (sd) & 110.94 (11.29) & 150.83 (14.85) & 611.41 (49.88) & 1348.45 (131.55) \\ \hline
\multirow{3}{*}{\begin{tabular}[c]{@{}c@{}}Modified \\ Sphere\end{tabular}} & min. value & 5.24e - 05 & 1.02e - 04 & 1.30e - 04 & 2.59e - 04 \\ \cline{2-6} 
 & sd of solutions & \textless 0.01 & \textless 0.01 & \textless 0.01 & \textless 0.01 \\ \cline{2-6} 
 & mean time (sd) & 41.99 (2.04) & 87.83 (5.72) & 308.74 (10.65) & 577.34 (17.90) \\ \hline
\multirow{3}{*}{\begin{tabular}[c]{@{}c@{}}Modified \\ Griewank\end{tabular}} & min. value & 1.04e - 01 & 1.27e - 01 & 9.25e - 02 & 2.43e - 01 \\ \cline{2-6} 
 & sd of solutions & 0.58 & 0.28 & 1.15 & 0.79 \\ \cline{2-6} 
 & mean time (sd) & 58.48 (2.98) & 201.70 (10.59) & 741.82 (20.01) & 3291.46 (210.35) \\ \hline
\end{tabular}
\egroup}
\caption{Performance of MSiCOR in higher dimensional modified Rastrigin, Ackley, Sphere and Griewank function starting from 100 randomly generated initial points. Objective function values at the best (or minimum) solution out of 100 obtained solutions using MSiCOR are noted down for each case (`min. value' row); along with standard deviation (s.d.) of objective function values at the solutions (`sd of solutions' row). Also mean and s.d. (within parenthesis) of computation times (in seconds) are noted for each case (`mean time (sd)' row).}
\label{tab_high_dim}
\end{table}

\subsection{Theoretical properties}
\label{sec_theory}
The greatest challenge of solving a non-convex optimization problem is that in general algorithms cannot be designed to guarantee reaching the global optima. However, it is a desirable property of any algorithm that it should reach a global minimum when the function is convex. In this section, via \cref{theorem}, it is shown that taking the values of the parameters $\phi$, $\tau_1$ and $\tau_2$ significantly small, the stopping criteria of the proposed algorithm ensures that the solution obtained is a global minimum in case the objective function is convex. The detailed proof of \cref{theorem} is provided in Section B of the Supplementary material.

\begin{theorem}
\label{theorem}
Suppose $\bs=\Delta^{n_1-1} \times \cdots \times \Delta^{n_B-1}$ and $f$ is convex, continuous and differentiable on $\bs$. Suppose $\mathbf{u} = (\mathbf{u}_1,\dots, \mathbf{u}_B) \in \bs$ and $\mathbf{u}_j = (u_{j,1}, \dots, u_{j,n_j}) \in \Delta^{n_j-1}$ for $j = 1,\dots, B$, and each of its coordinates are non-zero. Consider a sequence $\delta_{j,k} = s_j/\rho^k$ for $k\in \mathbb{N}$, $s_j > 0$, $\rho > 1$ for all $j = 1,\dots, B$. Define $\mathbf{u}_{j,k}^{(i+)} = (u_{j,1} - \delta_{j,k}/(n-1), \dots, u_{j,i-1} - \delta_{j,k}/(n-1), u_{j,i} +\delta_{j,k}, u_{j,i+1} - \delta_{j,k}/(n-1), \dots, u_{j,n_j} - \delta_{j,k}/(n-1))$ and $\mathbf{u}_{j,k}^{(i-)} = (u_{j,1} + \delta_{j,k}/(n-1), \dots, u_{j,i-1} + \delta_{j,k}/(n-1), u_{j,i} -\delta_{j,k}, u_{j,i+1} + \delta_{j,k}/(n-1), \dots, u_{j,n_j} + \delta_{j,k}/(n-1))$ for $j = 1,\dots, B$ and $i = 1,\dots, n_j$. If for all $k\in \mathbb{N}$, $f(\mathbf{u}) \leq f(\mathbf{u}_1, \dots, \mathbf{u}_{j-1}, \mathbf{u}_{j,k}^{(i+)}, \mathbf{u}_{j+1}, \dots, \mathbf{u}_{B})$ and $f(\mathbf{u}) \leq f(\mathbf{u}_1, \dots, \mathbf{u}_{j-1}, \mathbf{u}_{j,k}^{(i-)}, \mathbf{u}_{j+1}, \dots, \mathbf{u}_{B})$ (whenever $\mathbf{u}_{j,k}^{(i+)}, \mathbf{u}_{j,k}^{(i-)} \in \Delta^{n_k-1})$ for $j = 1,\dots, B$ and $i = 1,\dots, n_j$, $\mathbf{u}$ is a point of global minimum of $f$. \\
\end{theorem}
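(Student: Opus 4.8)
The plan is to reduce the family of discrete inequalities in the hypothesis to a single first-order (gradient) condition at $\mathbf{U}$, and then invoke the standard fact that a convex differentiable function on a convex set attains a global minimum at any point satisfying the first-order optimality condition.

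First I would rewrite the perturbations as movements along fixed directions. For each block $j$ and coordinate $i$, set $\mathbf{d}_{j,i} = \mathbf{e}_i - \frac{1}{n_j-1}\sum_{l \neq i}\mathbf{e}_l \in \mathbb{R}^{n_j}$, so that $\mathbf{u}_{j,k}^{(i+)} = \mathbf{u}_j + \delta_{j,k}\mathbf{d}_{j,i}$ and $\mathbf{u}_{j,k}^{(i-)} = \mathbf{u}_j - \delta_{j,k}\mathbf{d}_{j,i}$. Each $\mathbf{d}_{j,i}$ has coordinate-sum zero, so these moves preserve the affine constraint of the $j$-th simplex. Because $\mathbf{U}$ is an interior point (all coordinates strictly positive) and $\delta_{j,k} = s_j/\rho^k \to 0$, for all sufficiently large $k$ both perturbed points lie in $\Delta^{n_j-1}$, so the hypothesized inequalities hold along a sequence of step sizes decreasing to $0$. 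This use of the ``for all $k$'' clause is what makes the limiting argument available.

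Next I would extract the gradient condition. Writing $\mathbf{D}_{j,i}$ for the vector in $\mathbb{R}^{M}$ that equals $\mathbf{d}_{j,i}$ on block $j$ and $0$ elsewhere, the inequalities $f(\mathbf{U}) \le f(\mathbf{U} + \delta_{j,k}\mathbf{D}_{j,i})$ give $\frac{f(\mathbf{U}+\delta_{j,k}\mathbf{D}_{j,i})-f(\mathbf{U})}{\delta_{j,k}} \ge 0$ for all large $k$; letting $k\to\infty$ and using differentiability yields $\nabla f(\mathbf{U})\cdot \mathbf{D}_{j,i} \ge 0$. The companion inequalities for the $-$ moves give $\nabla f(\mathbf{U})\cdot \mathbf{D}_{j,i} \le 0$, hence $\nabla f(\mathbf{U})\cdot \mathbf{D}_{j,i} = 0$ for every $j$ and $i$. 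Then I would show this forces the gradient to annihilate every feasible direction: the tangent space of $\mathbf{S}$ at $\mathbf{U}$ is $T = \{\mathbf{d} : \sum_i d_{j,i}=0 \text{ for each } j\}$, and within each block the differences $\mathbf{d}_{j,i}-\mathbf{d}_{j,1} = \frac{n_j}{n_j-1}(\mathbf{e}_i-\mathbf{e}_1)$ span the $(n_j-1)$-dimensional block tangent space, so $\{\mathbf{D}_{j,i}\}$ spans $T$. Thus $\nabla f(\mathbf{U})\perp T$. Since any $\mathbf{V}\in\mathbf{S}$ satisfies $\mathbf{V}-\mathbf{U}\in T$, we obtain $\nabla f(\mathbf{U})\cdot(\mathbf{V}-\mathbf{U})=0$, and convexity gives $f(\mathbf{V}) \ge f(\mathbf{U}) + \nabla f(\mathbf{U})\cdot(\mathbf{V}-\mathbf{U}) = f(\mathbf{U})$, so $\mathbf{U}$ is a global minimum.

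I expect the main obstacle to be the bookkeeping in the spanning step together with the boundary subtlety: one must confirm that the $2M$ probing directions are rich enough to cover the full tangent space of the product of simplices, and that the interiority hypothesis (all coordinates of $\mathbf{U}$ nonzero) is exactly what guarantees both the feasibility of the small moves and that the cone of feasible directions is the entire tangent space rather than a proper subcone. The convexity-to-global-minimum implication itself is standard once the first-order condition $\nabla f(\mathbf{U})\cdot(\mathbf{V}-\mathbf{U})\ge 0$ has been established.
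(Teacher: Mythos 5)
Your proposal is correct, but it reaches the conclusion by a genuinely different route than the paper. The paper's proof first eliminates the simplex constraints by a change of chart: it drops the last coordinate of each block via the bijection $\mathbf{I}:\Delta^{n_1-1}\times\cdots\times\Delta^{n_B-1}\mapsto S_1^*\times\cdots\times S_B^*$, transports $f$ to $f^*=f\circ\mathbf{I}^{-1}$ on the full-dimensional domain $S_1^*\times\cdots\times S_B^*$, verifies convexity of $f^*$ directly, and then imports the two analytic facts --- eventual feasibility of $\mathbf{u}_{j,k}^{(i\pm)}$ for $k\geq N_j$, and the vanishing of the partial derivatives $\nabla_{j,i}=\frac{\partial}{\partial x_{j,i}}f_j^*(\mathbf{u}_j^*)$ --- by citing the arguments in the proof of Theorem 4.1 of \cite{Das2016}, concluding that an interior stationary point of the convex $f^*$ is a global minimum and pulling back through $\mathbf{I}$. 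You instead stay in the ambient coordinates: you encode the probes as $\pm\delta_{j,k}\mathbf{d}_{j,i}$ with the zero-sum directions $\mathbf{d}_{j,i}=\mathbf{e}_i-\frac{1}{n_j-1}\sum_{l\neq i}\mathbf{e}_l$, pass to the limit in the difference quotients to get vanishing directional derivatives, and prove explicitly (via $\mathbf{d}_{j,i}-\mathbf{d}_{j,1}=\frac{n_j}{n_j-1}(\mathbf{e}_i-\mathbf{e}_1)$) that these $2M$ directions span the tangent space $T$ of the affine hull, finishing with the first-order convexity inequality along segments. Your version is self-contained --- it re-derives rather than cites the feasibility and stationarity steps --- and it makes transparent exactly why interiority of $\mathbf{U}$ is needed (both for feasibility of small moves and so that the feasible cone is all of $T$); the spanning computation is correct. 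The paper's chart-based version buys something your sketch glosses over: since $\mathbf{S}$ has empty interior in $\mathbb{R}^M$, the symbol $\nabla f(\mathbf{U})$ is only defined up to a vector in $T^{\perp}$, and "differentiable on $\mathbf{S}$" really means differentiability relative to the affine hull; the reparametrization makes all partial derivatives unambiguous, whereas in your argument you should phrase everything in terms of the linear functional $\mathbf{d}\mapsto D_{\mathbf{d}}f(\mathbf{U})$ on $T$ (which is all you actually use, so the gap is cosmetic, not fatal). The paper's route also mirrors the single-block case so the earlier theorem can be reused wholesale, at the cost of the bijection bookkeeping and the transported-convexity verification that your direct approach avoids.
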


It should be noted that taking step-size threshold $\phi$ small enough, the allowable values of local step-sizes $s_{j,i}^{+}$ and $s_{j,i}^{-}$ can be taken as close to zero as required. Also note that in Theorem \ref{theorem}, the role of $\delta_{j,k}$ is analogous to that of $s_{j,i}^{+}$ and $s_{j,i}^{-}$ in \cref{sec.algo}. In other words, in the proposed algorithm if we take $\tau_1 = 0$ and $M_{\text{iter}} = \infty$, the iterations within a run stops when for very small value of $s_{j,i}^{+}$ and $s_{j,i}^{-}$ for $j=1,\dots, B$ and $i=1,\dots, n_j$, corresponding movements in the neighborhood do not yield better solution than the current solution. Hence, it that scenario, the obtained solution by the proposed algorithm is a global minimum if the objective function follows the regularity conditions considered in Theorem \ref{theorem}. Note that, for a convex function satisfying the regularity conditions, the convergence criteria ensures that at the end of any run the solution obtained is a global minimum. Hence, in this case, evaluation of only one run will be enough to find the global minimum.

\section{Simulation study}
\label{sec_sim}

\subsection{Setup}

We consider $K=1000$ patients belonging to $L = 3$ clusters. The number of states is chosen to be $n=10$. For each cluster, we simulate rows of each $10 \times 10$ transition matrix $M_l$ from an uniform Dirichlet distribution. Initial state distribution vectors corresponding to clusters $s_1,\dots,s_L$ are also generated from an uniform Dirichlet distribution. 
We choose $p=4$ covariates (including an intercept term), which are randomly generated from a $\mathbf{N}(0,I_4)$ distribution.
For each patient, we generate the length of their observed Markov chain (i.e., $h_i$ for the $i$-th patient) from a discrete uniform distribution on $\{6,\dots,12\}$. The length of the chain of the same subject may vary across different simulations. % \tcomm{if you fix the length across all simulations after you generate one set, please clarify here}(\pd{mentioned})
To generate the observations, we first compute the probabilities of each patient belonging to the different clusters $w_{kl}$ using \cref{eq.prob2}. Then, using multinomial draws corresponding to prior probabilities, cluster memberships of each patient is evaluated. For each patient, based on corresponding cluster membership, initial state is generated using multinomial draws from initial state distribution vector. The following states of the Markov chain are generated using multinomial draws from corresponding cluster-specific transition matrix.

\subsection{Parameter estimation}\label{param_est}

In order to estimate the parameters, we first look for a warm starting point for cluster-specific initial state distribution vectors and transition matrices. We first maximize
$L_{\bM,\bs,\Gamma}(Y_1,\dots,Y_K)$ given by \cref{eq.prob2} taking $w_{kl} = \frac{1}{L}$, (i.e., equal weight for each cluster) using MSiCOR and obtain initial estimates for $\bM$ and $\bs$. Then for given initial values of $\bM$ and $\bs$, we maximize $L_{\bM,\bs,\Gamma} (Y_1,\dots,Y_K)$ given by \cref{eq.prob2} and thus we obtain an initial estimate for $\Gamma$. For maximizing the likelihood as a function of $\Gamma$ for given $(\bM,\bs)$, we use \texttt{patternsearch} function in MATLAB which performs a global maximization of the likelihood as a function of $\Gamma$. Once we obtain the initial estimates for $(\bM,\bs,\Gamma)$, we then maximize the likelihood updating $(\bM,\bs,\Gamma)$ simultaneously within each iteration of the proposed algorithm. Note that the parameters $\bM,\bs$ are a collection of simplexes and hence can be directly estimated using MSiCOR, however, $\Gamma$ being unconstrained, we adopt another optimization technique for updating $\Gamma$ at each iteration within MSiCOR. \cite{Das2023b} proposed global optimization technique Recursive Modified Pattern Search (RMPS) for optimizing hyper-rectangular parameter space. RMPS can be easily modified for optimizing unconstrained parameter space (see Algorithm 1 in the supplementary file). Using MSiCOR and modified RMPS for unconstrained optimization, at each iteration, we update $(\bM,\bs,\Gamma)$ simultaneously. This joint algorithm is provided in Algorithm \ref{joint_algo}. Here in the algorithm, $(\bM,\bs)$ being collection of simplexes, we denote it by $\mathbf{P}$ and unconstrained parameter $\Gamma$ is denoted by $\mathbf{l}$.

\begin{algorithm}
\caption{Algorithm for jointly updating multiple simplexes (using MSiCOR) and unconstrained parameter vector (using RMPS).}\label{joint_algo}
\begin{algorithmic}[1]
\State $R \gets 1$
\BState \emph{top}:
\State $h \gets 1$
\State $\eta^{(0)},\eta^{(1)} \gets s_{\text{initial}}$
\If{$R = 1$}
\State $\boldsymbol{\mathbf{P}}^{(0)} \gets \text{Initial guess}$ (Cell/list of simplexes of dimensions $n_1, n_2, \dots, n_B$ respectively)
\State $\boldsymbol{\mathbf{l}}^{(0)} \gets \text{Initial guess}$ (unconstrained parameter vector)
\Else 
\State $\boldsymbol{\mathbf{P}}^{(0)} \gets \widehat{\mathbf{P}}^{(R-1)}$
\State $\boldsymbol{\mathbf{l}}^{(0)} \gets \widehat{\mathbf{l}}^{(R-1)}$
\EndIf
\While{$(h \leq M_{\text{iter}}$ and $\eta^{(h)} > \phi)$}
\State $F \gets f(\mathbf{P}^{(h-1)})$
\State $\eta \gets \eta^{(h-1)}$
\State Find $f_{j_{\text{best}},v_{\text{best}}}$ and set $F_1 \gets f_{j_{\text{best}},v_{\text{best}}}$ using steps 12-36 of Algorithm \ref{euclid}.
\State Find $g_{k_{\text{best}}}$ and set $F_2 \gets g_{k_{\text{best}}}$ using steps 12-19 of Algorithm 1 of the supplementary file.
\State $\mathbf{P}^{(h)} \gets \mathbf{P}^{(h-1)}$
\State $\mathbf{l}^{(h)} \gets \mathbf{l}^{(h-1)}$
\State $F' \gets \min(F_1,F_2)$
\If{$(F' < F)$}
\If{$(F_1 < F_2)$}
\State $\mathbf{P}_{j_{\text{best}}}^{(h)} \gets \mathbf{q}_{j_{\text{best}}, v_{\text{best}}}$
\Else 
\State $\mathbf{l}^{(h)} \gets \mathbf{l}_{k_{\text{best}}}$
\EndIf
\EndIf
\If{$(h > 1)$}
\If{$(|F-\min(F,F')| < tol\_fun$ and $\eta>\phi)$}
\State $\eta \gets \eta/\rho$
\EndIf
\EndIf
\State $\eta^{(h)} \gets \eta$
\State $h \gets h+1$
\EndWhile
\State $(\widehat{\mathbf{P}}^{(R)}, \widehat{\mathbf{l}}^{(R)}) \gets (\mathbf{P}^{(h)},\mathbf{l}^{(h)})$, 
\If{$|f(\widehat{\mathbf{P}}^{(R)},\widehat{\mathbf{l}}^{(R)}) - f(\widehat{\mathbf{P}}^{(R-1)},\widehat{\mathbf{l}}^{(R-1)})| < \tau_2$ }
\State \Return $\widehat{\mathbf{P}} = \widehat{\mathbf{P}}^{(R)}, \widehat{\mathbf{l}} = \widehat{\mathbf{l}}^{(R)}$ as \textbf{final solution}
\State \textbf{exit}
\Else
\State $R \gets R+1$
\State \textbf{goto} \emph{top}.
\EndIf
\end{algorithmic}
\end{algorithm}

\subsection{Mapping true clusters to estimated clusters}
One of the main concerns regarding parameter estimation in mixture models or mixture of experts (MoE) \citep{Masoudnia2014} is to ensure identifiability. MSiCOR (for MMM without covariates) and MSiCOR-RMPS hybrid algorithm (for MMM with covariates) are both designed to find the point of maxima of the mixture likelihood; subsequently we obtain the estimated parameter values corresponding to each cluster. In case we perform MMM clustering to any new dataset, we can divide the sample population in desired number of clusters and identifiability might not be a concern in absence of reference clusters. However, to assess the estimation performance based on simulation study, it is crucial to map the estimated clusters to the true reference clusters. For any given cluster, let us denote the initial state distribution vector and the transition matrices by $s_{n \times 1}$ and $M_{n\times n}$ respectively. From $s$ and $M$, we construct the appended matrix $A = [s; M]_{(n+1)\times n}$. Given two appended matrices $B$ and $C$, we define the total variation distance (TVD) as
\begin{align*}
\mathbf{TVD}(B,C)
& =
\sum_{i=1}^{n+1}\sum_{j=1}^n|B(i,j) - C(i,j)|.
\end{align*}
Suppose we denote the true appended matrices (constructed using the initial state distribution vector and the transition matrices) by $A_1,A_2,\dots,A_L$ and the estimated appended matrices by $B_1,\dots,B_L$. Then we find the permutation $\sigma$ of $\{1,\dots,L\}$ for which $\sum_{l=1}^L \mathbf{TV}(A_l,B_{\sigma(l)})$ is minimized. Thus corresponding permutation helps us identifying the mapping across the true clusters to the estimated clusters.
\subsection{Results}
After we obtain the estimates of $(\bM,\bs,\Gamma)$, using \cref{eq.memebership}, we find the membership probabilities of each subject to three estimated clusters. We consider each subject to belong to the cluster with corresponding highest membership probabilities. To compare the true and the estimated cluster membership of the patients, first we map three true clusters to three estimated clusters. Then we calculate the true positive rate (TPR) of membership of the subjects. In order to calculate the standard error of the parameter estimates, we perform 20 simulation iterations. The true and estimated parameter values are provided in \cref{true_est_prop1}. It is noted that the true and the estimated cluster covariate coefficients are close.

\begin{table}[ht]
\centering

\begin{tabular}{|c|c|c|c|c|c|}
\hline
Clusters & & Intercept & Var 1 & Var 2 & Var 3 \\ \hline
\multirow{2}{*}{Cluster 2} & True & $-0.75$ & $0.64$ & $-0.19$ & $-0.13$ \\ \cline{2-6} 
& Est. & $-0.738$ $(0.0212)$ & $0.614$ $(0.0229)$ & $-0.185$ $(0.0239)$ & $-0.149$ $(0.0162)$ \\ \cline{2-6}
\multirow{2}{*}{Cluster 3} & True & $-0.07$ & $1.12$ & $-0.25$ & $-0.4$ \\ \cline{2-6} 
& Est. & $-0.086$ $(0.0199)$ & $1.123$ $(0.0192)$ & $-0.270$ $(0.0210)$ & $-0.414$ $(0.0213)$ \\ \hline
\end{tabular}
\caption{True and estimated coefficients of the covariates in the simulation study. Standard errors are provided in the parenthesis.}
\label{true_est_prop1}
\end{table}
As mentioned earlier, based on posterior membership probabilities from estimated parameters, the membership of the subjects can also be estimated. In \cref{true_est_prop2} we note down the true and estimated proportions of subjects belonging to each cluster.
\begin{table}[H]
\centering

\begin{tabular}{|c|c|c|c|}
\hline
& Cluster 1 & Cluster 2 & Cluster 3 \\ \hline
True proportion & $0.422$ & $0.174$ & $0.404$ \\ \hline
Estimated proportion & $0.422$ $(3.4 \times 10^{-3})$ & $0.178$ $(3.2 \times 10^{-3})$ & $0.400$ $(3.9 \times 10^{-3})$ \\ \hline
\end{tabular}
\caption{True and estimated cluster proportions (empirical standard errors shown in parentheses).}
\label{true_est_prop2}
\end{table}

\subsection{Comparison with the EM algorithm}
\label{sec_EM}
In order to compare the performance of MSiCOR with the EM algorithm in the MMM (without covariates), we consider another simulation study. Here again we consider $K=1000$ patients where each patient belongs to one of the $L=3$ clusters. The dimension of the state-space is taken to be $n = 10$. Corresponding to each cluster, initial state distribution vectors are generated from uniform Dirichlet distribution. The transition matrices corresponding to the first, second and the third clusters are taken to be $30\%$, $50\%$ and $70\%$ sparse respectively. The non-zero elements of each row of the transition matrices corresponding to each cluster are generated from uniform Dirichlet distribution. For each patient, the length of their observed Markov chain (i.e., $h_i$ for the $i$-th patient) is chosen following a discrete uniform distribution on $\{6,\dots,12\}$. Then based on the obtained true transition matrices and the initial state probability vectors, the patient-specific state-space sequences are generated. To fit EM, we use the \texttt{fit\_model} function in \texttt{seqHMM} R package \citep{Helske2019}. The simulation experiment is repeated 50 times and corresponding TVD is measured for both MSiCOR and EM methods. Using TVD, estimated clusters are mapped to the true clusters. After mapping, the misclassification rate (MR) is calculated. In \cref{em_table} it is observed that MSiCOR performs better than the EM algorithm in terms of TVD and MR measures. It is noted that MSiCOR performs better than the EM algorithm yielding lower TVD and MR compared to the EM algorithm. It is also noted that unlike MSiCOR, the EM algorithm seems not to converge on a few occasions yielding very high TPR and MR values (\cref{TVD}).

\begin{table}[H]
\centering
\begin{tabular}{|c|c|c|c|c|c|}
\hline
Sample size & Method & TVD & Max TVD & MR & Max MR \\ \hline
\multirow{2}{*}{$n = 500$} & MSiCOR & $4.57$ $(0.13)$ & $5.27$ & $0.000$ $(0.000)$ & $0.004$ \\
& EM & $5.41$ $(0.66)$ & $26.42$ & $0.019$ $(0.013)$ & $0.474$\\ \hline
\multirow{2}{*}{$n = 1000$} & MSiCOR & $3.18$ $(0.04)$ & $3.63$ & $0.000$ $(0.000)$ & $0.003$ \\
& EM & $3.66$ $(0.48)$ & $26.34$ & $0.010$ $(0.010)$ & $0.434$ \\ \hline
\end{tabular}
\caption{Comparison of MSiCOR and EM based on total variation distance (TVD) and misclassification Rate (MR) based on 50 simulation experiments; mean TVD and TPR values are noted down in the table. Standard errors are provided in parentheses.}
%\ds{how can MR for MSiCOR be zero when max MR is positive?}}
\label{em_table}
\end{table}

\begin{figure}[ht]
\centering
\includegraphics[width=0.8\textwidth]{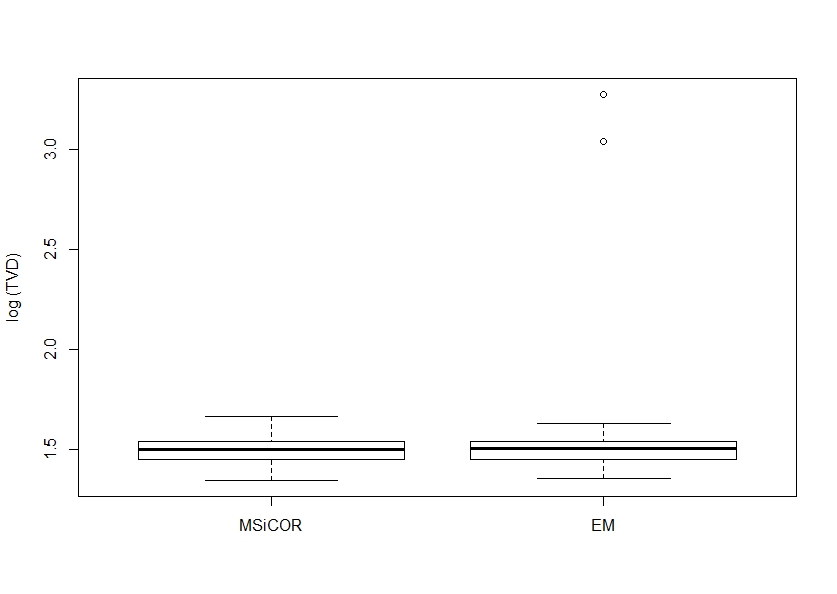}
\caption{Boxplot of $\log$ of total variation distance (TVD) for MSiCOR and EM based on 50 simulation experiments with sample size 500.}
\label{TVD}
\end{figure}

\section{Application to clustering of medication-sequence data of Multiple Sclerosis patients in EHR cohort}
\label{sec_real}

As a test case, we use MS disease-modifying therapy (DMT) sequence data from an electronic health record (EHR) cohort based at the Massachusetts General and Brigham hospital system (Boston, US) which includes the Comprehensive Longitudinal Investigation of Multiple Sclerosis at Brigham and Women's Hospital (CLIMB) cohort \citep{Zhang2020, Ahuja2021,Hou2021,Liang2022,Xia2013}. The EHR cohort contains patient-level data, including DMTs as well as a number of clinical and demographic variables. In this data set, there are twelve available DMTs for MS patients: alemtuzumab, cyclophosphamide, daclizumab, dimethyl fumarate, fingolimod, glatiramer acetate, interferon-beta, mitoxantrone, natalizumab, ocrelizumab, rituximab and teriflunomide. Of these, daclizumab has been withdrawn from the market and only a few patients received this DMT. Therefore we exclude daclizumab from our analysis by omitting the corresponding encounters from the MS DMT sequences. We combine rituximab and ocrelizumab under the same mechanistic category (that is, B-cell Depletion). We thus have ten DMT categories in total, which form the state-space of our Markov model. Further, we only consider patients who started on MS DMTs on or after January 1, 2006, because the Mass General Brigham system began implementation of the electronic prescriptions during 2005. 

To avoid over-counting given that consecutive visits that are only a few days apart can sometimes list the same DMT prescription, we combine the MS DMT observations into three-month period clusters, starting from DMT start date. Within any three-month period, the consecutive same DMTs are counted as one observation. If a patient takes one DMT during a given three-month period, we count it as one observation representing the given three-month period. For example, during any 3 month period, if the encounters are $A \to A \to A \to A \to A \to A \to A$ or $A \to A \to A \to A$ or $A$ (as long as all encounters in that three-month period are the same DMT $A)$, we take the observation as only $A$ for that 3 month period. On the other hand, consider a scenario where a patient has been on DMT sequence $A \to A \to A \to A \to B \to B \to B \to A \to A \to A \to C \to C$ during a three-month period. By including the unique consecutive DMTs into one observation, we then get $A \to B \to A \to C$ as the representative observation for that three-month period. 

\begin{table}[ht]
\centering

\begin{tabular}{|c|l|}
\hline
Number of clusters & \multicolumn{1}{c|}{BIC} \\ \hline
3 & \textbf{13072} \\ \hline
4 & 13556 \\ \hline
5 & 14237 \\ \hline
6 & 15065 \\ \hline
\end{tabular}
\caption{Summary of Bayesian Information Criterion (BIC) values after fitting mixture Markov model with covariates for number of clusters $L = 3,\dots,6$.}
\label{BIC}
\end{table}

In the EHR cohort, for clustering, we only consider the patients for which clinical and demographic data are available along with MS DMT sequnce data; also those patients must start on MS DMT on or after 2006. After applying the aforementioned filters, we finally cluster 822 patients. In the mixture Markov model analysis, we consider the following covariates that are routine in MS research: age at diagnosis, disease duration, gender, race (white, black, and others). The disease duration is the time elapsed from the year of first neurological symptom to the DMT start year. The parameters are estimated using the joint algorithm (using MSiCOR and modified RMPS for unconstrained optimization) as described in \cref{param_est}.

\begin{figure}[ht]
\centering
\includegraphics[width=0.95\textwidth]{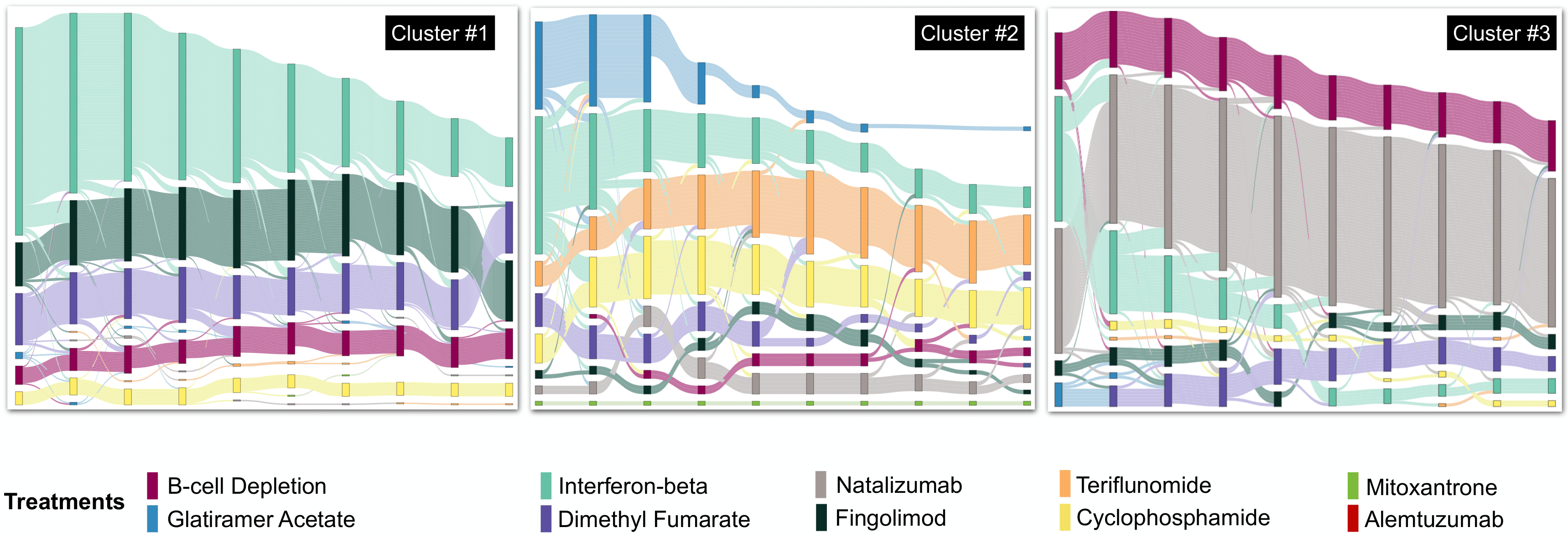}
\caption{Upto first 10 MS DMTs are plotted for the patients from each cluster. B-cell Depletion DMT group consists of Rituximab and Ocrelizumab.}
\label{dmt_plot}
\end{figure}

To identify the number of clusters, we fit the MMM with covariates for number of clusters $3,\dots,6$. Using Bayesian Information Criterion (BIC) \citep{Schwarz1978}, we identify the optimal number of clusters to be 3 (\cref{BIC}). After we cluster the 822 patients into 3 DMT sequence clusters, we identify the corresponding cluster for any patient by calculating the posterior cluster membership probabilities given by \cref{eq.memebership}. Once the cluster membership probabilities are calculated for a patient, we assign that patient to the cluster corresponding to the highest membership probability. Among the 822 patients, 445 patients belong to the first cluster, 161 patients belong to the second cluster and the 216 patients belong to the third cluster. In \cref{dmt_plot}, we plot the MS DMT sequences of the patients corresponding to each cluster, up to the first 10 DMTs that they have received in their disease course. In case a patient has received fewer than 10 DMTs, the rest of the slots are kept blank. In the first cluster, most patients have been on interferon-beta, while a small number of patients have been on rituximab-ocrelizumab (B-cell Depletion), fingolimod and dimethyl fumarate. In the second cluster, patient-specific lengths of DMT sequences are smaller than that of the first and the third cluster. In this cluster, patients are mostly treated with glatiramer acetate, interferon-beta, teriflunomide, cyclophosphamide and dimethyl fumarate. In the third cluster, the majority of the patients have been on natalizumab. In summary, patients in the first and the third cluster received predominantly interferon-beta and natalizumab, respectively, whereas patients in cluster 2 received multiple DMTs without any predominant DMT. We also estimate the cluster-specific coefficient values corresponding to the clinical and demographic covariates considered which is provided in \cref{coeffs}.

\begin{table}[ht]
\centering
\resizebox{\columnwidth}{!}{
\begin{tabular}{|lllllll|}
\hline
Clusters & Intercept & Age at Diagnosis & Disease duration & \begin{tabular}[c]{@{}l@{}}Gender \\ (Female = 1)\end{tabular} & Race White & Race Black \\ \hline
Cluster 2 & $-2.48$ $(0.04)$ & $0.04$ $(0.02)$ & $-0.0$ $(0.01)$ & $0.29$ $(0.06)$ & $0.34$ $(0.05)$ & $-0.09$ $(0.05)$ \\
\multicolumn{1}{c}{Cluster 3} & $-0.17$ $(0.06)$ & $-0.02$ $(0.01)$ & $-0.04$ $(0.03)$ & $0.40$ $(0.11)$ & $0.75$ $(0.09)$ & $1.11$ $(0.08)$ \\ \hline
\end{tabular}}
\caption{Regression coefficient for clinical and demographical variables for cluster 2 and 3 taking cluster 1 as the reference cluster. Coefficients are estimated within proposed mixture Markov model with covariates analysis. Standard errors (calculated based on 50 bootstrap samples), is provided in the parenthesis.}
\label{coeffs}
\end{table}

Among the 822 patients, a subset of 488 patients belong to the CLIMB cohort. We compare clinical characteristics across the 3 clusters for the patients belonging to the CLIMB cohort which is noted down in \cref{yearly_rates}. CLIMB patients have more detailed clinical information. Among CLIMB cohort patients, Cluster 1, 2 and 3 account for 56\%, 20\% and 24\% of the patients, respectively. When comparing demographics, Cluster 2 patients from CLIMB cohort (pfCc) have the oldest mean age at diagnosis. Cluster 1 pfCc have the lowest whereas Cluster 3 pfCc have the highest proportion of women. When comparing MS outcomes during patient follow-up, Cluster 3 pfCc have the highest annualized relapse rate or ARR (registry data) as well as the highest annualized counts of MS ICD codes, MS-related MRI CPT codes, and MS-relevant CUIs (for example, ``multiple sclerosis.'', ``physical therapy''). Cluster 1 and 2 pfCc share similar mean ARR as well as the mean annualized counts of MS ICD codes and MS-related MRI CPT codes. When assessing the yearly incident relapse rate over time, all three clusters decline over time which is consistent with our prior finding \citep{Liang2022}, but Cluster 3 pfCc show consistently higher yearly relapse rate than the other two clusters (\cref{yir}). Cluster 3 pfCc also have the highest mean annualized counts of total ICD codes and total CPT codes, suggesting the highest healthcare utilization and comorbidity burden. Cluster 2 pfCc contains larger proportion of DMT ``cyclers'' (who experienced high frequency of switches to different DMTs) than Cluster 1 and 3 pfCc. For example, patients in Cluster 2 switched off standard-efficacy DMTs (for example, interferon-beta, glatiramer acetate), while patients in Cluster 3 switched to natalizumab, a higher-efficacy DMT which was approved in 2004. Cluster membership according to DMT prescription sequences correlate with key clinical outcomes such as yearly relapse rate (\cref{yir}).

\begin{figure}[H]
\centering
\includegraphics[width=0.95\textwidth]{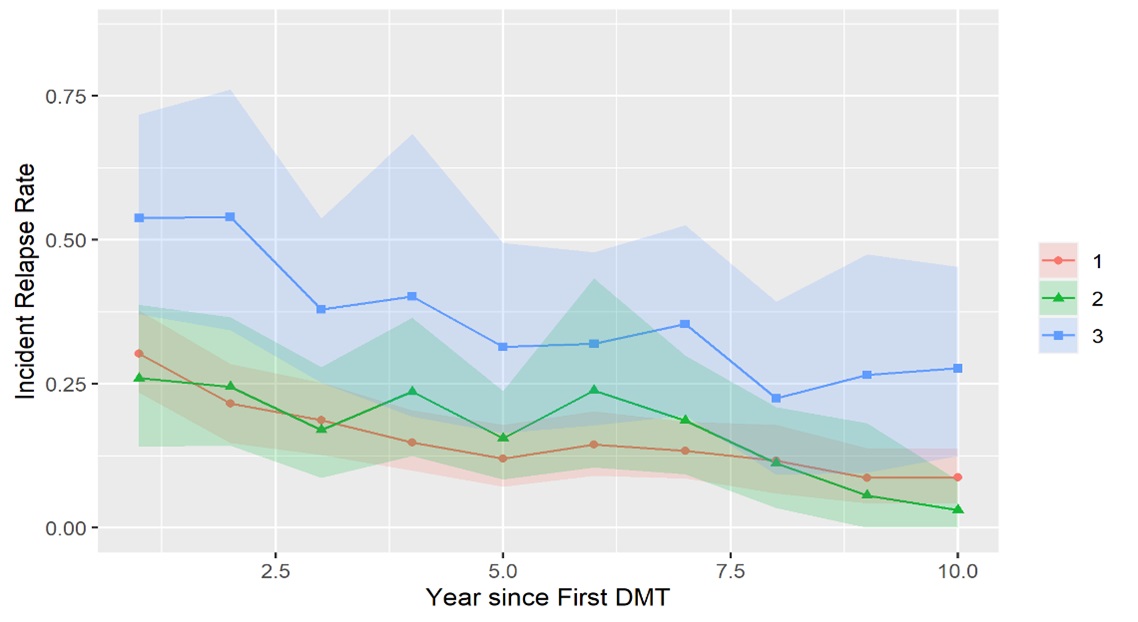}
\caption{Yearly incident relapse rate over time for patients from CLIMB cohort belonging to all three clusters.}
\label{yir}
\end{figure}

\begin{table}[]
\centering
\begin{tabular}{lccc}
\hline
Feature & Cluster 1 & Cluster 2 & Cluster 3 \\ \hline
Number of Patients & $271$ & $100$ & $117$ \\
Age at Diagnosis (year) & $36.5$ $(10.1)$ & $41.9$ $(10.7)$ & $34.1$ $(9.5)$ \\
Duration from Diagnosis to 1st DMT (year) & $0.9$ $(2.4)$ & $1.4$ $(2.9)$ & $1.5$ $(3.1)$ \\
Race-White & $90$\% & $94$\% & $93$\% \\
Race-Black & $4$\% & $3$\% & $7$\% \\
Race-Other & $6$\% & $3$\% & $0$\% \\
Gender-Male & $32$\% & $27$\% & $19$\% \\
Gender-Female & $68$\% & $73$\% & $81$\% \\
Annualized Relapse Rate & $0.16$ $(0.2)$ & $0.18$ $(0.3)$ & $0.33$ $(0.5)$ \\
ICD : Total (count/year) & $19.2$ $(20.0)$ & $26.2$ $(25.6)$ & $40.3$ $(42.2)$ \\
ICD : MS (count/year) & $11.8$ $(7.9)$ & $13.9$ $(10.4)$ & $24.2$ $(16.1)$ \\
CPT : Total (count/year) & $20.0$ $(16.0)$ & $25.5$ $(23.4)$ & $36.2$ $(41.1)$ \\
CPT : MS MRI (count/year) & $3.7$ $(1.9)$ & $3.4$ $(2.0)$ & $4.6$ $(2.3)$ \\
CPT : ED visit (count/year) & $0.7$ $(0.5)$ & $0.6$ $(0.6)$ & $0.8$ $(0.7)$ \\
CUI : Communicable disease (count/year) & $1.3$ $(2.5)$ & $1.7$ $(2.4)$ & $4.2$ $(4.1)$ \\
CUI : Double vision (count/year) & $0.6$ $(0.8)$ & $0.7$ $(1.0)$ & $0.6$ $(1.0)$ \\
CUI : Gadolinium (count/year) & $1.8$ $(1.0)$ & $1.9$ $(1.1)$ & $2.5$ $(1.5)$ \\
CUI : MRI (count/year) & $5.6$ $(2.3)$ & $5.9$ $(3.3)$ & $7.4$ $(3.8)$ \\
CUI : Methylprednisolone (count/year) & $1.3$ $(3.0)$ & $1.6$ $(3.0)$ & $2.6$ $(5.1)$ \\
CUI : Multiple sclerosis (count/year) & $7.6$ $(5.4)$ & $9.4$ $(7.5)$ & $15.0$ $(12.0)$ \\
CUI : Sensation loss (count/year) & $6.1$ $(3.5)$ & $6.7$ $(5.0)$ & $8.5$ $(6.3)$ \\
CUI : Nystagmus (count/year) & $1.0$ $(0.8)$ & $1.4$ $(1.1)$ & $1.3$ $(1.1)$ \\
CUI : Optic neuritis (count/year) & $0.4$ $(1.1)$ & $0.6$ $(1.6)$ & $0.6$ $(1.6)$ \\
CUI : Sense of pain (count/year) & $3.3$ $(4.1)$ & $4.7$ $(5.3)$ & $7.5$ $(9.3)$ \\
CUI : PET scan (count/year) & $3.0$ $(3.5)$ & $4.7$ $(5.5)$ & $7.9$ $(10.2)$ \\
CUI : Steroid (count/year) & $1.4$ $(1.5)$ & $1.6$ $(1.9)$ & $2.3$ $(2.9)$ \\
CUI : Recurrent disease (count/year) & $0.7$ $(0.8)$ & $0.9$ $(1.1)$ & $1.3$ $(1.9)$ \\
CUI : Physical therapy (count/year) & $3.1$ $(3.6)$ & $4.8$ $(5.4)$ & $8.1$ $(10.8)$ \\
CUI : Has difficulty doing (count/year) & $1.9$ $(1.5)$ & $2.4$ $(1.9)$ & $3.4$ $(4.6)$ \\
CUI : Migraine disorders (count/year) & $0.8$ $(0.8)$ & $0.9$ $(0.8)$ & $1.3$ $(1.6)$ \\
CUI : Flare (count/year) & $0.3$ $(0.6)$ & $0.4$ $(0.7)$ & $0.7$ $(1.6)$ \\
CUI : Tingling sensation (count/year) & $0.6$ $(0.9)$ & $0.8$ $(1.0)$ & $0.9$ $(1.3)$ \\ \hline
\end{tabular}
\caption{Characteristics of patient clusters based on DMT prescription sequence: \% or Mean (SD)}
\label{yearly_rates}
\end{table}

\section{Conclusion}
\label{sec_discussion}

In the context of estimating the parameters in the proposed model, we propose a novel Blackbox optimization technique MSiCOR based on Pattern Search (PS) for optimizing any function over the collection of unit-simplexes, where the simplexes can be of different dimensions. We also show that for convex functions, under some regularity conditions, MSiCOR converges to the global optimum point. Based on comparative study using several benchmark functions, MSiCOR is shown to outperform Genetic Algorithm (GA), Sequential Quadratic Programming (SQP) and Interior Point (IP) in terms of performance, in general. To maximize the likelihood of the proposed MMM with covariates, along with MSiCOR, we further use modified Recursive Modified Pattern Search (RMPS) for unconstrained optimization (provided in Section D of the Supplementary Material). Instead of updating the simplex constrained parameters and unconstrained parameters alternatively, we update all those parameters simultaneously using a joint algorithm and the iterative update steps of MSiCOR and updated RMPS for unconstrained optimization. 

Further, we proposed a novel MMM with covariates technique to cluster subjects based on their state sequence data along with clinically relevant patient-specific covariates. Using EHR data on treatment prescription sequence, the proposed method can cluster patients into clinically meaningful subgroups. The proposed model is useful for identifying the differential treatment sequences and trajectories in the patient population. Further, it informs how patient-specific covariates influence the treatment sequence and trajectory. Once the clusters are identified, membership probabilities for different clusters could be computed for future patients based on the estimated parameters. 

The simulation study shows that the estimated coefficients of the covariates for different clusters in the proposed MMM with covariates are close to the true values. Further for general MMM (without covariates), using MSiCOR, we obtain better results when compared to the EM algorithm. 

As a critical test case, we deploy the proposed method to cluster and analyze MS patients with available DMT and clinical and demographic covariates as a part of a well-characterized research cohort study. We identify 3 clusters of MS patients based on membership on the basis of DMT prescription sequence, where the first and the third cluster are enriched for interferon-beta and natalizumab, respectively, while the patients in the second cluster received multiple DMTs without a single predominant DMTs. Patients in different DMT sequence clusters exhibited different demographic and clinical characteristics. Notably, DMT sequence cluster informed differential clinical outcomes.
In the future, the proposed algorithm could be applied to other chronic diseases where medication sequence data and patient-specific covariate values are available. The proposed global optimization technique (combining MSiCOR and modified RMPS for unconstrained optimization) can also be applied for mixture Hidden Markov model analysis.
\vspace{15pt}

\noindent {\large\bf Supplementary Material}\\
\textbf{Additional results: } Supplementary material is made available \href{https://github.com/priyamdas2/MSiCOR/blob/main/MSiCOR_Supplementary_material.pdf}{\underline{here}}. \\
\textbf{Codes and dataset: } MATLAB codes and de-identified dataset are made available \href{https://github.com/priyamdas2/MSiCOR}{\underline{here}}.\\
\noindent {\large\bf Funding}
\begin{description}
\item Dr. Xia was supported by NINDS R01NS098023, NINDS R01NS124882.
\end{description}
\noindent {\large\bf Disclosure statement}
\begin{description}
\item The authors report there are no competing interests to declare. 
\end{description}
\bibliography{references}

\newpage

\end{document}